
\documentclass[journal]{IEEEtran}

\usepackage{graphicx}      % include this line if your document contains figures
\usepackage{amsmath} % assumes amsmath package installed
\usepackage{amssymb}  % assumes amsmath package installed
\usepackage{amsthm}
\usepackage{color,soul}
\usepackage{colortbl}
\usepackage{subfigure}
\usepackage{array}
\usepackage{booktabs}
\usepackage{url}
\usepackage{multicol}
\usepackage{multirow}
\usepackage{epstopdf}
\usepackage{makecell}
\usepackage{enumerate}

\newtheorem{lemma}{Lemma}
\newtheorem{clr}{Corollary}
\newtheorem{thm}{Theorem}
\newtheorem{defi}{Definition}
\newtheorem{ass}{Assumption}

\theoremstyle{remark}
\newtheorem{rem}{Remark}

\begin{document}
%
% paper title
% Titles are generally capitalized except for words such as a, an, and, as,
% at, but, by, for, in, nor, of, on, or, the, to and up, which are usually
% not capitalized unless they are the first or last word of the title.
% Linebreaks \\ can be used within to get better formatting as desired.
% Do not put math or special symbols in the title.
\title{A Posteriori Probabilistic Bounds of Convex Scenario Programs with Validation Tests}
%
%
% author names and IEEE memberships
% note positions of commas and nonbreaking spaces ( ~ ) LaTeX will not break
% a structure at a ~ so this keeps an author's name from being broken across
% two lines.
% use \thanks{} to gain access to the first footnote area
% a separate \thanks must be used for each paragraph as LaTeX2e's \thanks
% was not built to handle multiple paragraphs
%

\author{Chao~Shang,~\IEEEmembership{Member,~IEEE}
        and~Fengqi~You,~\IEEEmembership{Senior Member,~IEEE}% <-this % stops a space
% \thanks{ \textit{(Corresponding author: F. You.)}}
\thanks{
% Manuscript received Month xx, xxxx.
%(Corresponding author: F. You)

C. Shang is with Department of Automation, Beijing National Research Center for Information Science and Technology, Tsinghua University, Beijing 100084, China (e-mail: {\tt c-shang@tsinghua.edu.cn}).

F. You is with College of Engineering, Cornell University, Ithaca, New York 14853, USA (e-mail: {\tt fengqi.you@cornell.edu}).}% <-this % stops a space
}

\maketitle

% As a general rule, do not put math, special symbols or citations
% in the abstract or keywords.
\begin{abstract}
Scenario programs have established themselves as efficient tools towards decision-making under uncertainty. To assess the quality of scenario-based solutions a posteriori, validation tests based on Bernoulli trials have been widely adopted in practice. However, to reach a theoretically reliable judgement of risk, one typically needs to collect massive validation samples. In this work, we propose new a posteriori bounds for convex scenario programs with validation tests, which are dependent on both realizations of support constraints and performance on out-of-sample validation data. The proposed bounds enjoy wide generality in that many existing theoretical results can be incorporated as particular cases. To facilitate practical use, a systematic approach for parameterizing a posteriori probability bounds is also developed, which is shown to possess a variety of desirable properties allowing for easy implementations and clear interpretations. By synthesizing comprehensive information about support constraints and validation tests, improved risk evaluation can be achieved for randomized solutions in comparison with existing a posteriori bounds. Case studies on controller design of aircraft lateral motion are presented to validate the effectiveness of the proposed a posteriori bounds.
\end{abstract}

% Note that keywords are not normally used for peerreview papers.
\begin{IEEEkeywords}
Scenario approach, stochastic programming, Bernoulli trials, data-driven decision-making.
\end{IEEEkeywords}

\IEEEpeerreviewmaketitle

%===============================================================================
\section{Introduction}
\label{sec:introduction}
%===============================================================================
\IEEEPARstart{T}HE widespread presence of uncertainty has been invariably a crucial issue in design, analysis and optimization of complex systems, and the ignorance of uncertainty can lead to decisions that are fragile in the real-world uncertain environment. In the past decades, decision-making under uncertainty has raised immense research efforts across various communities. Typical examples include robust optimal control, where control inputs are designed to yield invariably satisfactory performance under model mismatch, unmeasured disturbance, and measurement noise \cite{zhou1996robust,tempo2012randomized}. Likewise, in filter design for fault detection, such effects have been taken into account in order to avoid high false alarm rates \cite{ding2008model}.

%Stochastic programming has been established as a powerful tool to handle uncertainty-perturbed decision-making problems formulated in a probabilistic manner \cite{prekopa2003probabilistic}. By introducing chance constraints, partial constraint violations can be tolerated and less conservative decisions can be attained \cite{birge2011introduction}. Despite such merits, chance-constrained programs are generally not endowed with tractable solutions due to the non-convexity and multi-variate convolution integrals, except for very few special cases \cite{nemirovski2006convex,shapiro2014lectures}.

The increasing availability of data has motivated rapid development of data-driven methods in systems and control \cite{van2020data}. As a representative, the scenario approach has been widely adopted as a data-driven approach for reliable decision-making under uncertainty \cite{calafiore2005uncertain,campi2008exact}, where a finite number of constraints defined on past samples of uncertainty are enforced to seek robustness. It has found wide applications in operations of smart grid \cite{wang2012chance}, service systems \cite{deng2016decomposition}, supply chain management \cite{santoso2005stochastic,you2013multicut}, and control design \cite{calafiore2006scenario,calafiore2011research}. The scenario approach is known to generalize well, which has been revealed theoretically that the probability of constraint violation of the solution can be safely controlled with high confidence by choosing an adequately large sample size \cite{calafiore2005uncertain,campi2008exact}. To trade robustness for performance, extensions based on constraint discarding have been also developed \cite{luedtke2008sample,campi2011sampling,care2015scenario}.

Due to the inherent randomness of scenario sampling, the optimal solution of a scenario program is itself a random variable; hence, the risk of the randomized solution, especially the violation probability, is a fortiori uncertain. Therefore, for robust and secure decision-making, a fair evaluation of the risk of the randomized solution must be performed before its implementation in real-world situations. If the risk is considered to be too high, the solution will not be accepted, and further refinement is necessitated. Towards this goal, the simplest approach is to carry out validation test on a collection of new instances of the uncertainty. If few violations are seen on the validation set, the solution is believed to offer a high protection level, and vice versa. Technically, the probabilistic guarantee is established based on finite-sample bounds for Bernoulli trials. The validation strategy is also adopted in the so-called \textit{sequential scenario approach} originally developed by \cite{oishi2007polynomial,dabbene2010randomized}, which solves a sequence of reduced-size scenario programs that are more computationally affordable \cite{care2014fast,alamo2015randomized,chamanbaz2016sequential,calafiore2017repetitive}. After obtaining the candidate solution in each iteration, validation test is performed to verify whether its violation probability is smaller than a pre-specified threshold. If so, the sequential algorithm terminates and a final solution is returned.

%% By deliberately designing sampling complexities of the scenario program and the validation test in each iteration, the violation probability as well as the confidence level of the final solution can be rigorously established \cite{alamo2015randomized,chamanbaz2016sequential,calafiore2017repetitive}.

Recently, an emerging line of research has concentrated on the so-called ``wait-and-judge" scenario approach \cite{campi2018wait,care2019wait,campi2018general,garatti2019risk}. To attain a robust design, sufficient data samples are first incorporated for optimization, and \textit{a posteriori} assessment of the reliability of scenario-based solutions can be made by counting observed decisive constraints, which can be interpreted as the \textit{complexity} of the scenario program \cite{garatti2019risk}. In comparison with a priori violation probability bounds, significant improvement can be achieved by the wait-and-judge approach, and it is even unnecessary to upper-bound the number of support constraints in advance \cite{campi2018wait,care2019wait}. However, this scheme still has some limitations in practical use. First, there may be computational limits that prohibit the utilization of all available data in the scenario program to seek robustness \cite{you2018distributed}. To alleviate the computational burden, one has to use only a fraction of scenarios for optimization and give up valuable information within unused scenarios. Second, after implementing a scenario-based decision new instances of uncertainty could keep arriving, and one may want to use these out-of-sample data to refine the risk judgement of the decision in hand. In both cases, additional information in scenarios that are not involved in the design problem cannot be effectively utilized for risk evaluation by the present wait-and-judge theory.

In this article, we seek to fill the knowledge gap by establishing a general class of \textit{a posteriori probabilistic bounds} for scenario-based solutions, where a decision-maker has to solve a scenario program with a fixed sample size, and then further evaluates the solution's risk on out-of-sample realizations of uncertainty. The proposed probabilistic bounds depend simultaneously on the number of decisive support constraints and the outcome of Bernoulli trials. In this way, refined evaluation of solution's risk can be attained by leveraging an increased amount of information from both decisive support constraints and out-of-sample performance on validation data. The established probabilistic bounds turn out to subsume many existing a priori and a posteriori probabilistic bounds as special cases, thereby enjoying widespread generality. To make practical use of the proposed bounds, an efficient parameterization approach is developed. By incorporating information from validation tests, the proposed bounds turn out to be no higher than existing wait-and-judge bounds that only use partial information. We also show that they bear clear interpretations because of their consistency with the validation performance, thereby greatly enhancing the practicability of wait-and-judge scenario optimization. In addition, we establish fundamental lower limits for the proposed bounds, and develop a refinement procedure to reduce the conservatism. Finally, we illustrate our main results with case studies on linear quadratic regulator (LQR) design of aircraft lateral motion, where the proposed bounds provide empirically better judgements than both the wait-and-judge approach and the outcome of Bernoulli trials. Comprehensive comparisons indicate that the efficacy of proposed bounds owes to a sophisticated synthesis of support constraint information and validation test information. For small validation sample size, it mainly relies on support constraint information and largely improves upon bounds based on Bernoulli trials, while under large validation sample size, validation test information tends to have a dominating effect and contribute to reliable judgements.

The layout of this paper is organized as follows. In Section \ref{sec:2}, the technical background of scenario programs is introduced. Section \ref{sec:3} formally states the main results of this work, and discusses some practical implementation issues. An illustrative example is provided in Section \ref{sec:5}, followed by concluding remarks in the last section.

\textit{Notations and Definitions}. $\mathbb{N}_0$ is the set of non-negative integer numbers, and the set of consecutive non-negative integers $\{j,\cdots,k\}$ is denoted by $\mathbb{N}_{j:k}$. The $p$-norm of a matrix is denoted by $\|\cdot\|_p$, while $\|\cdot\|$ represents the Euclidean norm of a vector by convention. The trace operator on a square matrix is denoted by ${\rm Tr}\{\cdot\}$. The indicator function of a subset $\mathcal{S}$ is defined as $\bold{1}_{\mathcal{S}}(s)$. Given a positive integer $N$, a non-negative integer $m \le N$, and $t \in (0,1)$, the Beta cumulative probability function is defined as $B_N(t;m) := \sum_{i=0}^m \binom{N}{i}t^i(1-t)^{N-i}$. $\bold{C}^q[0,1]$ denotes the class of $q$ times differentiable functions with continuous $q$-th derivative over the interval $[0,1]$, and $\bold{P}^K$ denotes the set of polynomials of degree $K$.

%===============================================================================
\section{Preliminaries}
\label{sec:2}
%===============================================================================
\subsection{Scenario Programs}
% Denote by $\bold{I}$ and $\bold{0}$ the identity matrix and the zero vector with dimensions deemed obviously by context.
Assume that $\Delta$ is a probability space, which is associated with a $\sigma$-algebra $\mathcal{F}$ and a probability measure $\mathbb{P}$. $\delta$ is a random outcome from the triplet $(\Delta,\mathcal{F},\mathbb{P})$. Moreover, denote by $(\Delta^m,\mathcal{F}^m,\mathbb{P}^m)$ the Cartesian product of $\Delta$ equipped with the $\sigma$-algebra $\mathcal{F}^m$ and the $m$-fold probability measure $\mathbb{P}^m \doteq \mathbb{P} \times \cdots \times \mathbb{P}$ ($m$ times). In general, the analytic form of $\mathbb{P}$ is unknown in practice, but we are allowed to sample a set of independent scenarios $\{ \delta^{(1)},\cdots,\delta^{(m)} \}$ from $\mathbb{P}^m$. Here, we refer to $\omega_m := \{ \delta^{(1)},\cdots,\delta^{(m)} \} \in \Delta^m$ as a \textit{multi-sample}, based on which the following scenario programs can be defined:
\begin{equation*}
{\rm SP}_m[\omega_m]: \left \{ \begin{split}
\min_{\bold{x} \in \mathbb{R}^d} &\ J(\bold{x}) \\
{\rm s.t.} &\ \bold{x} \in \mathcal{X} \subseteq \mathbb{R}^d \\
           &\ f(\bold{x}, \delta^{(i)}) \le 0,\ \forall i \in \mathbb{N}_{1:m}
\end{split} \right .
\label{eq:sp}
\end{equation*}
when $m > 0$, and
\begin{equation*}
{\rm SP}_0[\omega_0]: \left \{ \begin{split}
\min_{\bold{x} \in \mathbb{R}^d} &\ J(\bold{x}) \\
{\rm s.t.} &\ \bold{x} \in \mathcal{X} \subseteq \mathbb{R}^d
\end{split} \right .
\label{eq:sp0}
\end{equation*}
with $m = 0$ as a degenerate case. In this work, we focus on convex scenario programs, where as regularity conditions, the set $\mathcal{X}$ is assumed to be convex and closed, the objective function $J(\bold{x})$ is convex, and $f(\bold{x}, \delta)$ is lower semi-continuous and convex in $\bold{x} \in \mathbb{R}^d$ with the value of $\delta$ fixed. Given a candidate decision $\bold{x} \in \mathcal{X}$, its \textit{violation probability} is defined as:
\begin{equation}
V(\bold{x}) = \mathbb{P} \left \{ \delta \in \Delta: f(\bold{x}, \delta) > 0
\right \},
\end{equation}
which can also be interpreted as the \textit{risk} of $\bold{x}$, and the corresponding \textit{reliability level} is $1 - V(\bold{x})$ \cite{tempo2012randomized}.

Meanwhile, the following assumption is made throughout the paper, which is fairly standard in generic convex scenario programs \cite{campi2008exact,calafiore2006scenario}.
\begin{ass}[Feasibility and Uniqueness]
\label{ass:feas}
For every $m \in \mathbb{N}_0$ and every multi-sample $\omega_m \in \Delta^m$, the optimal solution $\bold{x}^*_m$ to the scenario program ${\rm SP}_m[\omega_m]$ exists and is unique.
\end{ass}

Note that in order to secure the uniqueness of $\bold{x}_m^*$, a suitable tie-break rule can be employed \cite{calafiore2005uncertain}. In this way, the optimal solution $\bold{x}_m^*$ to ${\rm SP}_m[\omega_m]$ is essentially a random variable defined over $\Delta^m$. Under Assumption \ref{ass:feas} and the conventional assumption on the measurability of $V(\bold{x}_m^*)$, a celebrated probabilistic guarantee in literature is that, the tail probability of $V(\bold{x}_m^*)$ is dominated by the Beta cumulative probability function \cite{campi2008exact}:
\begin{equation}
\mathbb{P}^m \left \{ V(\bold{x}_m^*) > \epsilon \right \} \le B_m(\epsilon;\zeta-1),\ m > \zeta,
\label{eq:campi}
\end{equation}
where $\zeta (\le d)$ can be the Helly's dimension \cite{calafiore2010random} or its tighter bound built with structural information \cite{schildbach2013randomized,zhang2015sample}. The probabilistic guarantee (\ref{eq:campi}) is closely related to the concept of \textit{support constraints}, which will be made heavy use of in this paper. \par

\begin{defi}[Support Constraint, \cite{campi2008exact}]
A constraint of the scenario program ${\rm SP}_m[\omega_m]$ is a support constraint if its removal yields a different optimal solution of the initial problem.
\end{defi}

Upon solving ${\rm SP}_m[\omega_m]$, the indices of induced support constraints can be described by a set $I_m^* \subseteq \mathbb{N}_0$, which is a set-valued mapping from the multi-sample $\omega_m$. The cardinality of $I_m^*$, i.e. the associated number of support constraints, is given by $s_m^* = |I_m^*|$. Likewise, $s_m^*$ is a function of $\omega_m$; hence, both $I_m^*$ and $s_m^*$ are essentially random variables on the triplet $(\Delta^m,\mathcal{F}^m,\mathbb{P}^m)$. The definition of support constraints differs from that of generic \textit{active constraints}, which are characterized by $\{i:\ f(\bold{x}_m^*, \delta^{(i)}) = 0 \}$. For convex scenario programs, support constraints are always active constraints, but the converse no longer remains true \cite{campi2018wait}. Meanwhile, under the availability of sufficient scenarios, the number of support constraints is always upper bounded by $\zeta$ \cite{calafiore2005uncertain}. For a fully-supported problem where the number of support constraints is exactly $\zeta$ with probability one (w.p.1), the inequality (\ref{eq:campi}) then becomes an equality \cite{campi2008exact}; in other cases, it inevitably induces conservatism.

Another useful assumption is made as follows.

\begin{ass}[Non-Degeneracy, \cite{campi2018wait}]
\label{ass:nondeg}
The solution to the scenario program ${\rm SP}_m[\omega_m]$ coincides w.p.1 with the solution to the program defined by support constraints only.
\end{ass}

It is noteworthy that Assumption \ref{ass:nondeg} is quite mild for general convex scenario programs, because it excludes anomalous situations where the solution defined by support constraints lies exactly on boundaries of other constraints with a nonzero probability.

\subsection{Confidence Intervals for Violation Probability with Validation Tests}

Given a candidate solution $\bold{x}$, a usual approach to evaluate its violation probability $V(\bold{x})$ is to perform Bernoulli trials with some independent validation scenarios $\{ \tilde{\delta}^{(1)},\cdots,\tilde{\delta}^{(M)}\}$. There are two possible outcomes in each trial, i.e. ``success" and ``failure", which are characterized by $f(\bold{x}, \tilde{\delta}^{(l)}) \le 0$ and $f(\bold{x}, \tilde{\delta}^{(l)}) > 0$, respectively. By summarizing results of Bernoulli trials, the empirical violation frequency on the validation dataset can be obtained.

\begin{defi}[Empirical Violation Frequency]
Given a set of $M$ independent scenarios $\{ \tilde{\delta}^{(1)},\cdots,\tilde{\delta}^{(M)}\}$, the empirical violation frequency of a solution $\bold{x}$ is calculated as:
\begin{equation}
r_M = \sum_{l=1}^M \bold{1}_{ \{\delta:f(\bold{x}, \delta) > 0 \} } (\tilde{\delta}^{(l)} ).
\label{eq:freq}
\end{equation}
That is, the number of scenario-based validation constraints that are violated by $\bold{x}$.
\end{defi}

With $\bold{x}$ fixed, $r_M$ is a random outcome on $\mathbb{P}^M$, which embodies useful information for estimating the binomial proportion, i.e., the true violation probability $V(\bold{x})$. In the context of robust design, a small violation probability is always desired, and hence an upper-bound for $V(\bold{x})$ based on $r_M$ shall be constructed to quantify the risk of an unreliable solution. This can be achieved by adopting the following one-sided bounds for evaluating the binomial proportion from a series of Bernoulli trials \cite{tempo2012randomized}.

\begin{thm}[One-Sided Chernoff Bound, \cite{chernoff1952measure}]
\label{thm:00}
Given a solution $\bold{x}$ and a pre-defined confidence level $\beta^* \in (0,1)$, it then holds that:
\begin{equation}
\mathbb{P}^{M} \left \{ V(\bold{x}) > \rho_M(r_M)  \right \} \le \beta^*,
\label{eq:onesided_Chernoff}
\end{equation}
where the a posteriori violation probability $\rho_M(r_M) = r_M/M + \sqrt{\log \beta^*/(-2M)}$ is a random variable depending on the realization of $r_M$.
\end{thm}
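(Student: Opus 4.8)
The plan is to recognize Theorem~\ref{thm:00} as a direct consequence of a standard exponential concentration inequality for sums of bounded independent random variables, once the validation experiment is reinterpreted as a sequence of Bernoulli trials. First I would fix the candidate solution $\bold{x}$, so that it does not depend on the validation multi-sample $\{\tilde{\delta}^{(1)},\cdots,\tilde{\delta}^{(M)}\}$ drawn from $\mathbb{P}^M$. Writing $Y_l := \bold{1}_{\{\delta: f(\bold{x},\delta)>0\}}(\tilde{\delta}^{(l)})$, each $Y_l$ takes values in $\{0,1\}$, is independent across $l$, and satisfies $\mathbb{E}[Y_l] = \mathbb{P}\{\delta: f(\bold{x},\delta)>0\} = V(\bold{x})$ by the definition of the violation probability. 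Hence $r_M = \sum_{l=1}^M Y_l$ is $\mathrm{Binomial}(M, V(\bold{x}))$, and the empirical frequency $r_M/M$ is an unbiased estimator of $V(\bold{x})$ built from $M$ i.i.d.\ observations bounded in $[0,1]$.

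Next I would rewrite the target event in the standard large-deviation form. Setting $\epsilon := \sqrt{\log\beta^*/(-2M)}$, which is real and positive because $\log\beta^*<0$ for $\beta^*\in(0,1)$, the event $\{V(\bold{x}) > \rho_M(r_M)\} = \{V(\bold{x}) > r_M/M + \epsilon\}$ coincides with the lower-tail event $\{r_M/M - V(\bold{x}) < -\epsilon\}$, i.e.\ the empirical mean falling below its expectation by at least $\epsilon$. To bound its probability I would invoke the exponential Markov (Chernoff) argument: for any $\lambda>0$,
\begin{equation*}
\mathbb{P}^M\{r_M/M - V(\bold{x}) \le -\epsilon\} \le e^{-\lambda\epsilon M}\,\mathbb{E}\big[e^{-\lambda(Y_1 - V(\bold{x}))}\big]^M,
\end{equation*}
and then control the moment generating factor of the centered bounded variable $Y_1 - V(\bold{x})$ by Hoeffding's lemma, which gives $\mathbb{E}[e^{-\lambda(Y_1-V(\bold{x}))}] \le e^{\lambda^2/8}$. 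Optimizing over $\lambda$ (the minimizer being $\lambda = 4\epsilon$) yields the one-sided Hoeffding/Chernoff bound
\begin{equation*}
\mathbb{P}^M\{r_M/M - V(\bold{x}) \le -\epsilon\} \le e^{-2M\epsilon^2}.
\end{equation*}

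Finally I would substitute the chosen value of $\epsilon$: since $\epsilon^2 = -\log\beta^*/(2M)$, the exponent equals $-2M\epsilon^2 = \log\beta^*$, so the right-hand side is exactly $e^{\log\beta^*}=\beta^*$. Because the strict event $\{V(\bold{x}) > \rho_M(r_M)\}$ is contained in the non-strict event $\{r_M/M - V(\bold{x}) \le -\epsilon\}$, the claimed inequality $\mathbb{P}^M\{V(\bold{x}) > \rho_M(r_M)\} \le \beta^*$ follows at once. The proof is essentially bookkeeping on top of a known concentration result; the only point requiring care is the initial reduction, namely that fixing $\bold{x}$ before sampling the validation data makes the indicators genuinely i.i.d.\ so that $r_M$ is Binomial and the product measure $\mathbb{P}^M$ factorizes. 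This independence is exactly what the bound silently relies on, and it would break if $\bold{x}$ were allowed to depend on the validation scenarios.
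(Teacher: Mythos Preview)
Your proof is correct and follows the standard Chernoff--Hoeffding route: interpret the validation outcomes as i.i.d.\ Bernoulli trials with mean $V(\bold{x})$, apply the exponential Markov inequality together with Hoeffding's lemma to the lower tail of the empirical mean, optimize over the free parameter, and then back-substitute the particular deviation $\epsilon = \sqrt{\log\beta^*/(-2M)}$. The paper itself does not supply a proof of Theorem~\ref{thm:00}; it is quoted as a classical result from \cite{chernoff1952measure} and \cite{tempo2012randomized}, so there is no in-paper argument to compare against. Your final remark about the independence assumption (that $\bold{x}$ must be fixed before the validation samples are drawn) is precisely the hypothesis under which the theorem is stated, and it is good that you made it explicit.
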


\begin{thm} [One-Sided Clopper-Pearson (C-P) Bound, \cite{clopper1934use,conover1980practical,thulin2014cost}]
\label{thm:0}
Given a solution $\bold{x}$ and a pre-defined confidence level $\beta^* \in (0,1)$, it then holds that:
\begin{equation}
\mathbb{P}^{M} \left \{ V(\bold{x}) > \eta_{M}(r_M) \right \} \le \beta^*,
\label{eq:onesided}
\end{equation}
where the analytic form of $\eta_M(\cdot)$ is expressed as:
\begin{equation}
\eta_{M}(l) = \left \{
\begin{array}{l l}
\min_{\eta} \{\eta:\ B_M(\eta;l) \le \beta^* \}, & {\rm if}\ l \ne M \\
1, & {\rm if}\ l=M
\end{array}
\right .
\label{eq:pmk}
\end{equation}
\end{thm}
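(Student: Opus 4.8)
The plan is to exploit the fact that, with $\bold{x}$ held fixed, the validation trials are independent and identically distributed Bernoulli experiments, so that $r_M$ follows a binomial law whose cumulative distribution function is exactly $B_M(\cdot;\cdot)$. Writing $v := V(\bold{x})$ for the fixed but unknown violation probability, each trial records a violation with probability $v$, whence $r_M \sim \mathrm{Bin}(M, v)$ and, for every integer $l$, $\mathbb{P}^M\{ r_M \le l \} = B_M(v; l)$. The entire argument then reduces to rewriting the random event $\{ v > \eta_M(r_M) \}$ as a deterministic threshold event on $r_M$ and evaluating its probability through this binomial identity.

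First I would record the analytic properties of the map $\eta \mapsto B_M(\eta; l)$ underlying the definition of $\eta_M$. For $l < M$ this map is continuous on $[0,1]$ and, via the telescoping identity $\frac{d}{d\eta}B_M(\eta;l) = -M\binom{M-1}{l}\eta^l(1-\eta)^{M-1-l}$, strictly decreasing on $(0,1)$, with $B_M(0;l)=1$ and $B_M(1;l)=0$. Hence $B_M(\eta;l)=\beta^*$ has a unique root, the set $\{\eta : B_M(\eta;l)\le\beta^*\}$ is a closed interval having that root as its left endpoint, and $\eta_M(l)$ equals precisely this root, so that $B_M(\eta_M(l);l)=\beta^*$. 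Coupling strict monotonicity in $\eta$ with the elementary fact that $B_M(\eta;l)$ is non-decreasing in $l$ (each increment adds the non-negative term $\binom{M}{l+1}\eta^{l+1}(1-\eta)^{M-l-1}$) yields the two facts that drive the proof: $\eta_M(l)$ is non-decreasing in $l$, and for every $l<M$ one has $\eta_M(l)<v \iff B_M(v;l)<\beta^*$.

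Next I would translate the target event. Since $\eta_M(M)=1\ge v$, the index $l=M$ never contributes, and because $\eta_M$ is non-decreasing the index set $\{ l : v > \eta_M(l) \}$ is a prefix $\{0,1,\dots,l^*\}$, where $l^* := \max\{ l < M : B_M(v; l) < \beta^* \}$ whenever this set is non-empty. Therefore $\{ v > \eta_M(r_M) \} = \{ r_M \le l^* \}$, and applying the binomial identity gives $\mathbb{P}^M\{ v > \eta_M(r_M) \} = B_M(v; l^*) < \beta^*$ by the very definition of $l^*$; if instead the index set is empty, the event has probability zero. In either case the claimed bound $\beta^*$ follows.

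The main obstacle, and the step deserving the most care, is this event translation: one must argue rigorously that the random upper confidence limit $\eta_M(r_M)$ inverts cleanly into a deterministic threshold on $r_M$. This rests entirely on the monotonicity of $\eta_M$ and on the strict-monotonicity-driven equivalence established above, so I would take pains to verify the boundary behaviour of $B_M(\cdot;l)$, including the degenerate cases $v \in \{0,1\}$ and the convention $0^0 = 1$, to ensure that no index is miscounted. Everything else is then a direct computation with the binomial cumulative distribution function.
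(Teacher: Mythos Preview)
Your proof is correct and is the standard direct argument for the one-sided Clopper--Pearson bound: fix $\bold{x}$, recognise $r_M$ as $\mathrm{Bin}(M,v)$, exploit the strict monotonicity of $\eta\mapsto B_M(\eta;l)$ and the monotonicity of $l\mapsto\eta_M(l)$ to rewrite $\{v>\eta_M(r_M)\}$ as a lower-tail event $\{r_M\le l^*\}$, and read off the bound from the binomial cdf.

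In the paper, Theorem~\ref{thm:0} is stated as a known result with citations and is not given an independent proof at that point. The closest the paper comes to a proof is Corollary~\ref{clr:22}, where the C--P bound is recovered as a special case of the paper's general Theorem~\ref{thm:1}: one sets $\epsilon(k,l)=\eta_M(l)$, checks that $g(t)=\beta^* t^N$ is feasible for the variational problem~(\ref{eq:beta}) by a piecewise argument over the intervals $[1-\eta_M(j+1),1-\eta_M(j))$, and finally extracts the conditional statement for fixed $\bold{x}$ via an auxiliary degenerate scenario program. That route is deliberately indirect---its purpose is to exhibit Theorem~\ref{thm:0} as a corollary of the paper's new framework, not to give an efficient proof. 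Your argument is far more elementary and self-contained, requiring nothing beyond the binomial cdf and its monotonicity, whereas the paper's derivation leans on the full dual/variational apparatus of Theorem~\ref{thm:1}. Both are valid; yours is the natural textbook proof, the paper's is a consistency check on its general theory.
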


The above probabilistic bounds are referred to as \textit{a posteriori bounds}, in the sense that they provide certificates on $V(\bold{x})$ after the value of $r_M$ is revealed \cite{tempo2012randomized}. Their use in validating scenario-based solutions can be illustrated as follows. Considering the case where there are $M=100$ samples for validating $\bold{x}$, and $r_M$ turns out to be $10$, setting $\beta^* = 10^{-6}$, one then obtains the one-sided Chernoff bound $\rho_{100}(10) = 0.3628$ and the one-sided C-P bound $\eta_{100}(10) = 0.3045$, respectively, indicating that the violation probability $V(\bold{x}_N^*)$ does not exceed $36.28\%\ ({\rm or}\ 30.45\%)$ with a very high confidence of $1 - 10^{-6}$ (i.e. $99.9999\%$). Note that the calculation of $\rho_M(r_M)$ is easier than that of $\eta_M(r_M)$. However, it is known that $\rho_M(r_M)$ is generally more conservative than $\eta_M(r_M)$, and it is possible that $\rho_M(r_M)$ have values greater than one, especially when the value of $r_M$ is large; by contrast, $\eta_M(r_M)$ is always between zero and one, thereby enjoying better practicability.

%===============================================================================
\section{Main Results}
\label{sec:3}
%===============================================================================
\subsection{A Posteriori Confidence Level Based on Violation Probability Function}

Assume that we are seated in a situation where a scenario program ${\rm SP}_N[\omega_N]$ with fixed $N>\zeta$ ($N$ is the initial number of available scenarios for optimization), is first solved to pursue robustness, and then the risk of $\bold{x}^*_N$ is further evaluated by computing the empirical violation frequency $r_M^*$ on $M$ extra validation samples. Since both the one-sided Chernoff bound and the one-sided C-P bound are applicable to any candidate solution $\bold{x}$, we can certainly use them to evaluate $\bold{x}^*_N$'s risk as a function of $r_M^*$. However, the optimal solution $\bold{x}_N^*$ as well as $s_N^*$ have some inherent randomness, whose realizations embody useful information about reliability. In this section, we present a more general theory to concurrently tackle two classes of randomness. To be more specific, the a posteriori bound $\epsilon(s_N^*,r_M^*)$ on the violation probability $V(\bold{x}^*_N)$ is considered as a function of both $s_N^*$ and $r_M^*$. With a group of functions $\{ \epsilon(k,l) \}$ pre-specified for $k \in \mathbb{N}_{0:\zeta},\ l \in \mathbb{N}_{0:M}$, the following finite-sample probabilistic guarantee can be established as one of our main results.

\begin{thm}
\label{thm:1}
Assume that $\{ \delta^{(1)}, \cdots, \delta^{(N)}, \tilde{\delta}^{(1)}, \cdots, \tilde{\delta}^{(M)} \}$ are sampled from the probability space $(\Delta^{N+M},\mathcal{F}^{N+M},\mathbb{P}^{N+M})$, and $\epsilon(k,l)$ is an arbitrary $[0,1]$-valued function where $k \in \mathbb{N}_{0:\zeta},\ l \in \mathbb{N}_{0:M}$. Then it holds that
\begin{equation}
\mathbb{P}^{N+M} \left \{ V(\bold{x}_N^*) > \epsilon(s_N^*,r_M^*) \right \} \le \beta^*,
\label{eq:guarantee}
\end{equation}
where $\beta^*$ is the optimal value of the following variational problem, defined based on functions $\{ \epsilon(k,l) \}$:
\begin{equation}
\begin{split}
&\ \inf_{g \in \bold{C}^{\zeta}[0,1]} g(1) \\
&\ \ \ \ \ {\rm s.t.} \ \ \ \binom{N}{k} t^{N-k} \cdot \sum_{l=0}^M  \binom{M}{l} t^{M-l} (1-t)^{l} \cdot \bold{1}_{[0,1-\epsilon(k,l))} (t) \\
&\ \ \ \qquad \qquad \quad \quad \quad \quad \le \frac{1}{k!}\frac{{\rm d}^k}{{\rm d}t^k}g(t),\ \forall t \in [0,1],\ k \in \mathbb{N}_{0:\zeta}
\end{split}
\label{eq:beta}
\end{equation}
\end{thm}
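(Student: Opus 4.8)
The plan is to collapse the two stages of randomness (optimization on the first $N$ samples, validation on the last $M$) into a single sum indexed by the support count $k=s_N^*$, match the resulting integrand to the left‑hand side of the variational constraint in (\ref{eq:beta}), and then dominate the whole sum by $g(1)$ via a universal identity.

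First I would fix $(k,l)=(s_N^*,r_M^*)$ and partition the event $\{s_N^*=k\}$ according to which $k$-subset $A\subseteq\mathbb{N}_{1:N}$ of optimization indices is the support set. Under Assumptions \ref{ass:feas}--\ref{ass:nondeg} the support set is unique, so this is a disjoint union, and ``$A$ is the support set'' is equivalent to the conjunction of the event $G_A$ --- all constraints in $A$ are support constraints of the reduced program solved on $\{\delta^{(i)}\}_{i\in A}$ alone, an event depending only on those samples --- together with the event that the remaining $N-k$ optimization constraints are satisfied by $\bold{x}_A$. On this event $\bold{x}_N^*=\bold{x}_A$, so $V(\bold{x}_N^*)=V(\bold{x}_A)=:V_A$ is measurable with respect to the samples in $A$. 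Conditionally on $\bold{x}_A$, the $N-k$ leftover optimization samples and the $M$ validation samples are i.i.d.\ and independent of $\bold{x}_A$, each violating $\bold{x}_A$ with probability $V_A$; hence the leftover‑satisfaction event contributes $(1-V_A)^{N-k}$ and $\{r_M^*=l\}$ contributes $\binom{M}{l}V_A^l(1-V_A)^{M-l}$. Summing over the $\binom{N}{k}$ exchangeable subsets $A$ (each contributing equally), over $l$, and over $k$, and writing $t=1-V_A$, I obtain
\[
\mathbb{P}^{N+M}\{V(\bold{x}_N^*)>\epsilon(s_N^*,r_M^*)\}
=\sum_{k=0}^{\zeta}\mathbb{E}\left[\bold{1}_{G_A}\,\binom{N}{k}t^{N-k}\sum_{l=0}^M\binom{M}{l}t^{M-l}(1-t)^l\,\bold{1}_{[0,1-\epsilon(k,l))}(t)\right],
\]
where the integrand is exactly the left‑hand side of the constraint in (\ref{eq:beta}), evaluated at $t=1-V_A$.

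The crux is then the universal identity $L[g]:=\sum_{k=0}^{\zeta}\frac{1}{k!}\mathbb{E}[\bold{1}_{G_A}\,g^{(k)}(1-V_A)]=g(1)$, valid for every $g\in\bold{C}^{\zeta}[0,1]$. I would first verify it on monomials $g(t)=t^n$: there $\frac{1}{k!}g^{(k)}(t)=\binom{n}{k}t^{n-k}$, and $\binom{n}{k}\mathbb{E}[\bold{1}_{G_A}(1-V_A)^{n-k}]=\mathbb{P}^n\{s_n^*=k\}$ by exactly the decomposition above applied to a hypothetical problem with $n$ samples (note $G_A$ and $V_A$ for a generic size‑$k$ set do not depend on $n$); summing over $k$ telescopes to $\sum_k\mathbb{P}^n\{s_n^*=k\}=1=g(1)$, using $s_n^*\le\zeta$. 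Since $L$ and $g\mapsto g(1)$ are continuous linear functionals on $\bold{C}^{\zeta}[0,1]$ equipped with the norm $\sum_{k=0}^{\zeta}\sup_{[0,1]}|g^{(k)}|$, and polynomials are dense there, the identity extends to all $g\in\bold{C}^{\zeta}[0,1]$. Finally, for any $g$ feasible in (\ref{eq:beta}) the constraint gives the integrand above $\le\frac{1}{k!}g^{(k)}(t)$ pointwise; multiplying by $\bold{1}_{G_A}\ge0$, taking expectations, and summing over $k$ bounds the displayed right‑hand side by $L[g]=g(1)$. As this holds for every feasible $g$, taking the infimum of $g(1)$ yields the claimed bound $\beta^*$.

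The main obstacle is the clean factorization in the first step: ensuring the measurability of $G_A$ and $V_A$ and invoking non‑degeneracy (Assumption \ref{ass:nondeg}) to rule out boundary configurations, so that ``being the support set'' splits into $G_A$ times the survival factor $(1-V_A)^{N-k}$ with no leftover coupling, and so that the validation count is conditionally $\mathrm{Binomial}(M,V_A)$ and jointly independent of the leftover optimization trials. The identity $L[g]=g(1)$ is the other delicate point, particularly the passage from monomials to general $g\in\bold{C}^{\zeta}[0,1]$; once continuity of $L$ in the $\bold{C}^{\zeta}$ norm and density of polynomials are established, however, this step is routine.
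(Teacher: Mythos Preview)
Your proposal is correct and arrives at the same conclusion, but by a genuinely different and more direct route than the paper. The paper expresses $\mathbb{P}^{N+M}\{V(\bold{x}_N^*)>\epsilon(s_N^*,r_M^*)\}$ through the generalized distribution functions $F_k(v)=\mathbb{P}^k\{V(\bold{x}_k^*)\le v\land s_k^*=k\}$, then sets up an infinite-dimensional moment problem with the $F_k$'s as primal variables subject to the constraints $\sum_{k}\binom{m}{k}\int(1-v)^{m-k}\,dF_k(v)=1$ for all $m\in\mathbb{N}_0$, truncates to finitely many moments, passes to the Lagrangian dual (whose multipliers become coefficients of a polynomial $g$), invokes weak duality, and lets the truncation level $K\to\infty$ using density of $\cup_{K\ge\zeta}\bold{P}_K$ in the feasible set of (\ref{eq:beta}). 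You bypass the entire primal--dual apparatus: your identity $L[g]=g(1)$ on monomials $t^m$ is precisely the paper's $m$th moment constraint, but you recognize it as a continuous linear functional equation, extend it to all of $\bold{C}^{\zeta}[0,1]$ by polynomial density in one stroke, and then bound the probability by $g(1)$ for every feasible $g$ directly. Your argument is shorter and avoids infinite-dimensional LP duality; the paper's approach, on the other hand, makes transparent that (\ref{eq:beta}) is the dual of a worst-case problem over all admissible scenario programs (encoded by the $F_k$'s), which sheds light on why the bound is distribution-free and how tight it can be. The first-stage decomposition (support set $A$, the event $G_A$, the factors $(1-V_A)^{N-k}$ and $\binom{M}{l}V_A^l(1-V_A)^{M-l}$) is essentially identical in both proofs and rests on Assumption~\ref{ass:nondeg} in the same way.
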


\begin{proof} The proof bears resemblance to Theorem 1 in \cite{campi2018wait} so we only provide details for key steps herein. First, the following group of \textit{generalized distribution functions} is defined:
\begin{equation}
F_k(v) = \mathbb{P}^k \left \{ V(\bold{x}_k^*) \le v \land s_k^* = k \right \},\ k\in \mathbb{N}_{0:\zeta}.
\label{eq:generalizedDF}
\end{equation}
Next, we show that the a posteriori violation probability $\mathbb{P}^{N+M} \left \{ V(\bold{x}_N^*) > \epsilon(s_N^*,r_M^*) \right \}$ can be calculated based on $\{ F_k(v) \}_{k=0}^{\zeta}$, which act as the backbone of the whole machinery. Notice that the following events do not overlap with each other:
\begin{equation*}
\left \{ s_N^* = k \land r_M^* = l \right \} \bigcap \left \{ s_N^* = k' \land r_M^* = l' \right \} = \emptyset,
\end{equation*}
if $k \ne k'$ or $l \ne l'$. Then the following decomposition can be made:
\begin{equation*}
\begin{split}
&\ \mathbb{P}^{N+M} \left \{ V(\bold{x}_N^*) > \epsilon(s_N^*,r_M^*) \right \} \\
= &\ \sum_{k=0}^{\zeta} \sum_{l=0}^M \mathbb{P}^{N+M} \left \{ V(\bold{x}_N^*) > \epsilon(k,l) \land s_N^* = k \land r_M^* = l \right \}.
\end{split}
\end{equation*}
We define the events on the probability space $\Delta^{N+M}$:
\begin{equation*}
\mathcal{V}_{k,l} = \left \{ V(\bold{x}_N^*) > \epsilon(k,l) \land s_N^* = k \land r_M^* = l \right \} \subseteq \Delta^{N+M}.
\end{equation*}
which indicates that the violation probability of $\bold{x}_N^*$ is larger than $\epsilon(k,l)$, and there are $k$ support constraints and $l$ violated validation constraints, respectively. In this way,
\begin{equation}
\mathbb{P}^{N+M} \left \{ V(\bold{x}_N^*) > \epsilon(s_N^*,r_M^*) \right \} = \sum_{k=0}^{\zeta}\sum_{l=0}^M \mathbb{P}^{N+M} \left \{ \mathcal{V}_{k,l} \right \}
\label{eq:9}
\end{equation}
holds. Next we derive analytic expressions for $\mathbb{P}^{N+M} \left \{ \mathcal{V}_{k,l} \right \}$. By fixing the indices of support constraints, we have:
\begin{equation}
\mathbb{P}^{N+M}\{\mathcal{V}_{k,l}\} = \binom{N}{k} \mathbb{P}^{N+M}\{\mathcal{A}_1 \cap \mathcal{A}_2\}.
\label{eq:10}
\end{equation}
where
\begin{gather*}
\begin{split}
\mathcal{A}_1 = & \left \{ V(\bold{x}_N^*) > \epsilon(k,l)\ \land\ s_N^* = k\ \land\ I_N^* = \mathbb{N}_{1:k} \right \},
\end{split} \\
\mathcal{A}_2 = \left \{ r_M^* = l \right \}.
\end{gather*}
It has been proved by \cite{campi2018wait} that, in the probability space $(\Delta^N,\mathcal{F}^N,\mathbb{P}^N)$, the following events are equal up to a zero probability set:
\begin{equation*}
\begin{split}
&\ \mathcal{A}_1 = \mathcal{B}_1 \triangleq \{ V(\bold{x}_k^*) > \epsilon(k,l) \land s_k^* = k \\
&\ \quad \quad \quad \quad \quad \quad \quad \quad \land f(\bold{x}_k^*,\delta^{(j)}) \le 0,\ j \in \mathbb{N}_{k+1:N} \},
\end{split}
\end{equation*}
which implies that:
\begin{equation}
\begin{split}
&\ \mathbb{P}^{N+M}\{\mathcal{A}_1 \cap \mathcal{A}_2\} \\
= &\ \mathbb{P}^{N+M}\{\mathcal{B}_1 \cap \mathcal{A}_2\} \\
= &\ \mathbb{P}^{N+M} \left \{ \begin{split}
&\ V(\bold{x}_k^*) > \epsilon(k,l) \land s_k^* = k  \land r_M^* = l \\
&\ \land f(\bold{x}_k^*,\delta^{(j)}) \le 0,\ j \in \mathbb{N}_{k+1:N} \end{split} \right \} \\
= &\ \int_{(\epsilon(k,l),1]} \binom{M}{l} (1-v)^{N-k+M-l} v^{l} {\rm d}F_k(v)
\end{split}
\label{eq:14}
\end{equation}
where the last equality is due to the fact that by fixing violation probability $v$, $(1-v)^{N-k}$ is the probability that $\mathcal{X}_{\delta^{(k+1)}}, \cdots, \mathcal{X}_{\delta^{(N)}}$ are satisfied by $\bold{x}_k^*$, and $\binom{M}{l}v^l(1-v)^{M-l}$ is the probability of observing exactly $l$ violated validation constraints. By using the definition of $F_k(v)$ in (\ref{eq:generalizedDF}) and taking integral over the interval $(\epsilon(k,l),1]$, (\ref{eq:14}) can be attained. Substituting (\ref{eq:10}) and (\ref{eq:14}) into (\ref{eq:9}) yields:
\begin{equation}
\begin{split}
&\ \mathbb{P}^{N+M} \left \{ V(\bold{x}_N^*) > \epsilon(s_N^*,r_M^*) \right \} \\
= &\ \sum_{k=0}^{\zeta} \sum_{l=0}^M \binom{N}{k}\binom{M}{l} \int_{(\epsilon(k,l),1]} (1-v)^{N-k+M-l} v^{l} {\rm d}F_k(v).
\label{eq:16}
\end{split}
\end{equation}

Next we show that the RHS of (\ref{eq:16}) is actually upper-bounded by $\beta^*$ given in (\ref{eq:beta}). The idea is to take generalized distribution functions $\{ F_k(v) \}_{k=0}^{\zeta}$ as decision variables subject to the following joint moment conditions \cite{campi2018wait}:
\begin{equation*}
\sum_{k=0}^{\min\{m,\zeta\}} \binom{m}{k} \int_{[0,1]} (1-v)^{m-k} {\rm d}F_k(v) = 1,\ m \in \mathbb{N}_0.
\end{equation*}
Therefore, we have:
\begin{equation*}
\mathbb{P}^{N+M} \left \{ V(\bold{x}_N^*) > \epsilon(s_N^*,r_M^*) \right \} \le \beta',
\end{equation*}
where $\beta'$ is the optimal value of the following variational problem:
\begin{equation}
\begin{split}
\sup_{\{ F_k(\cdot) \}} & \sum_{k=0}^{\zeta} \sum_{l=0}^M \binom{N}{k}\binom{M}{l} \int_{(\epsilon(k,l),1]} (1-v)^{N-k+M-l} v^{l} {\rm d}F_k(v) \\
{\rm s.t.}\ & \sum_{k=0}^{\min\{m,\zeta\}} \binom{m}{k} \int_{[0,1]} (1-v)^{m-k} {\rm d}F_k(v) = 1,\ m \in \mathbb{N}_{0} \\
& \ \ F_0, F_1, \cdots, F_{\zeta} \in \mathcal{C}
\end{split}
\label{eq:19}
\end{equation}
Here, $\mathcal{C}$ stands for the set of all positive generalized distribution functions. However, (\ref{eq:19}) is a generalized moment problem with infinite moment constraints, making the optimization problem difficult to solve analytically. Hence we turn to the following truncated problem:
\begin{equation}
\begin{split}
\sup_{\{ F_k(\cdot) \}} & \sum_{k=0}^{\zeta} \sum_{l=0}^M \binom{N}{k}\binom{M}{l} \int_{(\epsilon(k,l),1]} (1-v)^{N-k+M-l} v^{l} {\rm d}F_k(v) \\
{\rm s.t.}\ &\sum_{k=0}^{\min\{m,\zeta\}} \binom{m}{k} \int_{[0,1]} (1-v)^{m-k} {\rm d}F_k(v) = 1,\ m \in \mathbb{N}_{0:K} \\
& \ F_0, F_1, \cdots, F_{\zeta} \in \mathcal{C}
\end{split}
\label{eq:20}
\end{equation}
whose optimal value is $\beta'_K$. Because only $K$ constraints are involved in (\ref{eq:20}), (\ref{eq:20}) is a relaxation of (\ref{eq:19}), so we have $\beta'_K \ge \beta',\ \forall K$. Meanwhile, the value of $\beta'_K$ is non-increasing with $K$. By deriving the Lagrangian function and optimizing over $\{ F_k(\cdot) \}_{k=0}^{\zeta}$, we obtain the following dual problem \cite{anderson1987linear}:
\begin{equation}
\begin{split}
\inf_{\lambda_0,\cdots,\lambda_K} &\ \sum_{m=0}^K \lambda_m \\
{\rm s.t.} \ \  & \binom{N}{k} \sum_{l=0}^M \binom{M}{l} (1-v)^{N-k+M-l} v^{l} \bold{1}_{(\epsilon(k,l),1]} (v) \\
&\  \le \sum_{m=k}^K \lambda_m \binom{m}{k} (1-v)^{m-k},\ \forall t \in [0,1],\ k \in \mathbb{N}_{0:\zeta}
\end{split}
\label{eq:21}
\end{equation}
where $\lambda_0,\cdots,\lambda_K$ are dual variables and $\beta^*_K$ is the optimal value. Therefore, it holds that $\beta' \le \beta'_K \le \beta_K^*,\ \forall K \in \mathbb{N}_0$ due to weak duality. We define the polynomial of degree $K$ as $g(t) = \sum_{m=0}^K \lambda_m t^m$, which admits the following properties:
\begin{equation}
g(1) = \sum_{m=0}^K \lambda_m,\ \frac{1}{k!} \frac{{\rm d}^k}{{\rm d}t^k} t^m = \left \{ \begin{split}
& 0, \ \ \ \quad \quad \ \ \ \ m < k \\
& \binom{m}{k}t^{m-k}, \ m \ge k
\end{split}
\right .
\label{eq:22}
\end{equation}
By defining $t := 1 - v$ and plugging (\ref{eq:22}) into (\ref{eq:21}), (\ref{eq:21}) then becomes:
\begin{equation*}
\begin{split}
\inf_{g(\cdot) \in \bold{P}_K} &\ g(1) \\
{\rm s.t.} \ \  & \frac{1}{k!} \frac{{\rm d}^k}{{\rm d}t^k}g(t) \ge \binom{N}{k} \sum_{l=0}^M \binom{M}{l} t^{N-k+M-l} (1-t)^{l} \\
& \quad \quad \ \ \ \ \quad \quad \ \cdot \bold{1}_{[0,1-\epsilon(k,l))} (t),\ \forall t \in [0,1],\ k \in \mathbb{N}_{0:\zeta}
\end{split}
\label{eq:23}
\end{equation*}
Denoting by $\mathcal{D}$ the feasible region of problem (\ref{eq:beta}), it has been proved in \cite{campi2018wait} that the set $\left ( \cup_{K \ge \zeta} \bold{P}_K \right ) \cap \mathcal {D}$ is dense in $\mathcal {D}$. Therefore we can conclude that:
\begin{equation*}
\mathbb{P}^{N+M} \left \{ V(\bold{x}_N^*) > \epsilon(s_N^*,r_M^*) \right \} \le \beta' \le \inf_{K} \beta_K^* = \beta^*.
\end{equation*}
This completes the proof.
\end{proof}

\begin{rem}
Theorem \ref{thm:1} enjoys wide generality in assessing a posteriori violation probabilities of scenario-based solutions, since it holds for \textit{any convex scenario program irrespective of the probability distribution $\mathbb{P}$}. Note that the randomness arises from sampling of both $N$ scenarios and $M$ validation samples, and $s_N^*$ is a random variable defined on $\Delta^N$ only, whereas $r_M^*$ is defined on the $(N+M)$-fold product probability space $\Delta^{N+M}$ since it depends on both $\bold{x}_N^*$ and validation data. Hence, the finite-sample probabilistic guarantee \eqref{eq:guarantee} only applies to a design-and-validate procedure in one-shot, and one is not allowed to repeat the validation phase over and over (with the same or a different solution) until a satisfactory performance is obtained.
\end{rem}

Next we show that with independence of $\{\epsilon(k,l)\}$ on $k$ and/or $l$ imposed, Theorem \ref{thm:1} turns out to unify many existing theoretical results, including the wait-and-judge approach in \cite{campi2018wait} as well as the generic probabilistic guarantee (\ref{eq:campi}) in \cite{campi2008exact}. First, the following result reveals itself as an obvious corollary of Theorem \ref{thm:1}, which enforces the two-indexed bounds $\{ \epsilon(k,l) \}$ to depend on $k$ only.

\begin{clr}[Theorem 1 in \cite{campi2018wait}]
\label{clr:1}
Take $\epsilon(k,l) = \epsilon(k), \forall k \in \mathbb{N}_{0:\zeta},\ l \in \mathbb{N}_{0:M}$. Under Assumptions 1 and 2, it then holds that:
\begin{equation}
\mathbb{P}^{N} \left \{ V(\bold{x}_N^*) > \epsilon(s_N^*) \right \} \le \beta^*,
\label{eq:campi_mp}
\end{equation}
where $\beta^*$ is the optimal value of the following problem:
\begin{equation*}
\begin{split}
&\ \inf_{g \in \bold{C}^{\zeta}[0,1]} g(1) \\
&\ \ \ \ \ {\rm s.t.} \ \ \ \binom{N}{k} t^{N-k} \cdot \bold{1}_{[0,1-\epsilon(k))} (t) \le \frac{1}{k!}\frac{{\rm d}^k}{{\rm d}t^k}g(t),\ k \in \mathbb{N}_{0:\zeta}
\end{split}
\end{equation*}
\end{clr}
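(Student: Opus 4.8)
The plan is to obtain Corollary \ref{clr:1} as a direct specialization of Theorem \ref{thm:1} by forcing the two-indexed bounds $\{\epsilon(k,l)\}$ to be constant in the validation index $l$. The whole argument hinges on a single algebraic simplification: once $\epsilon(k,l)=\epsilon(k)$, the indicator $\bold{1}_{[0,1-\epsilon(k,l))}(t)=\bold{1}_{[0,1-\epsilon(k))}(t)$ ceases to depend on $l$, so it factors out of the summation over $l$ in the constraint of the variational problem (\ref{eq:beta}).

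First I would rewrite the left-hand side of that constraint as
\[
\binom{N}{k} t^{N-k}\cdot \bold{1}_{[0,1-\epsilon(k))}(t)\cdot \sum_{l=0}^{M}\binom{M}{l}t^{M-l}(1-t)^{l}.
\]
The residual sum is exactly the binomial expansion of $\big(t+(1-t)\big)^{M}=1$, so the entire $M$-dependent factor collapses to unity. The constraint then reads
\[
\binom{N}{k} t^{N-k}\cdot \bold{1}_{[0,1-\epsilon(k))}(t)\le \frac{1}{k!}\frac{{\rm d}^{k}}{{\rm d}t^{k}}g(t),\quad k\in\mathbb{N}_{0:\zeta},
\]
which is precisely the constraint of the variational problem stated in Corollary \ref{clr:1}. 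Since the feasible regions and the objective $g(1)$ are thereby identical, the optimal value $\beta^{*}$ transfers verbatim.

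It then remains to reconcile the two probability spaces: Theorem \ref{thm:1} asserts the bound over $\mathbb{P}^{N+M}$, whereas Corollary \ref{clr:1} states it over $\mathbb{P}^{N}$. Because $\epsilon(s_N^*)$ depends only on the first $N$ scenarios, the event $\{V(\bold{x}_N^*)>\epsilon(s_N^*)\}$ is $\mathcal{F}^{N}$-measurable and carries no dependence on the validation block $\{\tilde{\delta}^{(1)},\cdots,\tilde{\delta}^{(M)}\}$; marginalizing that block out via Fubini gives
\[
\mathbb{P}^{N+M}\{V(\bold{x}_N^*)>\epsilon(s_N^*)\}=\mathbb{P}^{N}\{V(\bold{x}_N^*)>\epsilon(s_N^*)\},
\]
so (\ref{eq:guarantee}) specializes to (\ref{eq:campi_mp}). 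I expect no genuine obstacle in this corollary—the content is entirely bookkeeping—and the only two points demanding care are the binomial identity $\sum_{l=0}^{M}\binom{M}{l}t^{M-l}(1-t)^{l}=1$ and the marginalization step just described. Assumptions \ref{ass:feas} and \ref{ass:nondeg} are inherited directly, as they already underlie the proof of Theorem \ref{thm:1}.
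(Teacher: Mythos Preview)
Your proposal is correct and follows essentially the same approach as the paper: the paper's proof consists of a single sentence invoking the binomial identity $\sum_{l=0}^{M}\binom{M}{l}(1-v)^{M-l}v^{l}=1$ and substituting it into (\ref{eq:beta}), which is exactly your core step. You are in fact slightly more careful than the paper, since you explicitly justify the passage from $\mathbb{P}^{N+M}$ to $\mathbb{P}^{N}$ via marginalization, a point the paper leaves implicit.
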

\begin{proof}
A straightforward substitution of the identity
\begin{equation*}
\sum_{l=0}^M \binom{M}{l} (1-v)^{M-l} v^{l} = 1
\end{equation*}
into (\ref{eq:beta}) yields the proof.
\end{proof}

As an established bound in the wait-and-judge approach, Corollary \ref{clr:1} implies that only information embedded in training data can be utilized to evaluate $V(\bold{x}_N^*)$. As aforesaid, one may want to carry out Bernoulli trials on extra scenarios in conjunction with (\ref{eq:campi_mp}) to update the risk judgement. In this sense, Theorem \ref{thm:1} has wider applicability than Corollary \ref{clr:1} because the information within support constraints and empirical validation tests can be synthesized.
%In this way, a decision maker is allowed to flexibly choose $N$ and $M$, and eventually attain an estimation of reliability level without sacrificing data information.

Based on Corollary \ref{clr:1}, one can easily cast the generic probabilistic guarantee (\ref{eq:campi}) as a corollary of Theorem \ref{thm:1} by trivially assuming constant violation probabilities.

\begin{clr}
By taking $\epsilon(k,l) \equiv \epsilon,\ \forall k \in \mathbb{N}_{0:\zeta},\ l \in \mathbb{N}_{0:M}$, and under Assumptions 1 and 2, one obtains:
\begin{equation}
\mathbb{P}^N \left \{ V(\bold{x}_N^*) > \epsilon \right \} \le B_N(\epsilon;\zeta-1).
\label{eq:campicalafi}
\end{equation}
\end{clr}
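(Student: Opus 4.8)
The plan is to obtain (\ref{eq:campicalafi}) as a degenerate instance of Corollary \ref{clr:1}. Since the choice $\epsilon(k,l)\equiv\epsilon$ does not depend on $l$, Corollary \ref{clr:1} applies verbatim with the constant profile $\epsilon(k)=\epsilon$, so that $\mathbb{P}^N\{V(\bold{x}_N^*)>\epsilon\}\le\beta^*$, where $\beta^*$ is the infimum of $g(1)$ over $g\in\bold{C}^{\zeta}[0,1]$ subject to $\binom{N}{k}t^{N-k}\bold{1}_{[0,1-\epsilon)}(t)\le\frac{1}{k!}\frac{{\rm d}^k}{{\rm d}t^k}g(t)$ for all $t\in[0,1]$ and $k\in\mathbb{N}_{0:\zeta}$. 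It therefore suffices to prove $\beta^*\le B_N(\epsilon;\zeta-1)$, which I would do by exhibiting feasible functions whose value at $t=1$ approaches $B_N(\epsilon;\zeta-1)$.

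First I would build a candidate that is tight to the left of the threshold and as flat as possible to its right. On $[0,1-\epsilon]$ I set $g(t)=t^N$; because $\frac{1}{k!}\frac{{\rm d}^k}{{\rm d}t^k}t^N=\binom{N}{k}t^{N-k}$, every constraint holds here with equality. On $[1-\epsilon,1]$, where the indicator vanishes and the constraints collapse to $\frac{{\rm d}^k}{{\rm d}t^k}g(t)\ge0$, I take $g$ equal to the degree-$(\zeta-1)$ Taylor polynomial of $t^N$ expanded about $1-\epsilon$, namely $P(t)=\sum_{k=0}^{\zeta-1}\binom{N}{k}(1-\epsilon)^{N-k}(t-(1-\epsilon))^k$. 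This makes the two pieces agree through order $\zeta-1$ at the junction; moreover every coefficient of $P$ is nonnegative and $P^{(\zeta)}\equiv0$, so $P^{(k)}(t)\ge0$ for $t\ge1-\epsilon$ and $k\in\mathbb{N}_{0:\zeta}$, i.e.\ the constraints are met on the right. Evaluating at $t=1$, where $t-(1-\epsilon)=\epsilon$, yields precisely $P(1)=\sum_{k=0}^{\zeta-1}\binom{N}{k}(1-\epsilon)^{N-k}\epsilon^k=B_N(\epsilon;\zeta-1)$, the target value.

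The one genuine obstacle is the $\bold{C}^{\zeta}$ regularity required by (\ref{eq:beta}): the piecewise function just described is only $\bold{C}^{\zeta-1}$, since its $\zeta$-th derivative jumps from $\frac{N!}{(N-\zeta)!}(1-\epsilon)^{N-\zeta}>0$ down to $0$ at $t=1-\epsilon$. I would remove this defect by mollifying the junction: on a short interval $[1-\epsilon,1-\epsilon+\delta]$ replace $P$ by a $\bold{C}^{\zeta}$ bridge whose $\zeta$-th derivative starts at the left-hand value, decays continuously to (essentially) zero, and stays nonnegative, so that all lower-order derivatives remain nonnegative and continuous. Such a correction keeps every constraint satisfied and inflates $g(1)$ by only $O(\delta)$; letting $\delta\to0$ produces genuine $\bold{C}^{\zeta}$ feasible functions with $g(1)\to B_N(\epsilon;\zeta-1)$, whence $\beta^*\le B_N(\epsilon;\zeta-1)$ and, via Corollary \ref{clr:1}, the claim (\ref{eq:campicalafi}). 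As a consistency check, this bound cannot be improved: for a fully-supported program $V(\bold{x}_N^*)$ follows a $\mathrm{Beta}(\zeta,N-\zeta+1)$ law and attains equality in (\ref{eq:campicalafi}), matching the classical guarantee (\ref{eq:campi}).
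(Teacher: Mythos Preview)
Your reduction to Corollary~\ref{clr:1} and the explicit construction of the near-optimal $g$ (namely $t^N$ on $[0,1-\epsilon]$ glued to its degree-$(\zeta-1)$ Taylor expansion about $1-\epsilon$, followed by a mollification to restore $\bold{C}^\zeta$ regularity) are correct, and this is precisely the argument of Corollary~1 in \cite{campi2018wait}, to which the paper simply defers. So your proposal matches the paper's intended proof.
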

\begin{proof}
See Corollary 1 in \cite{campi2018wait}.
\end{proof}

\subsection{Derivation of A Posteriori Probabilistic Bounds}
Theorem \ref{thm:1} is shown to be a generalization of many important probabilistic bounds in scenario optimization, with particular options for $\{ \epsilon(k,l) \}$ used. Hence, the freedom in choosing $\{ \epsilon(k,l) \}$ endows the proposed scheme with capability in evaluating the solution's reliability based on all possible realizations of $s_N^*$ and $r_M^*$. One typically wishes to ensure that the event $V(\bold{x}_N^*) \le \epsilon(s_N^*,r_M^*)$ occurs with a suitably high confidence, and in order to explicitly control the probability of a wrong declaration $V(\bold{x}_N^*) > \epsilon(s_N^*,r_M^*)$, it is desirable to first assign $\beta^*$, which is typically small, and then determine $\{ \epsilon(k,l) \}$ in reverse. In principle, there are infinitely many options of $\{ \epsilon(k,l) \}$ that lead to the same confidence level $\beta^*$. For practical use, a certain choice of $\{ \epsilon(k,l) \}$ is suggested in the following theorem. It subsumes Theorem 2 in \cite{campi2018wait} as a special case with $M = 0$ (no validation tests) and specific coefficients $\{a_m \equiv 1/(N+1),\ \forall m \in \mathbb{N}_{0:N}\}$, where the induced bound $\epsilon(k)$ depending only on $s_N^*$ equals $\epsilon(k,0)$. However, the present proof machinery has a significant departure from that in \cite{campi2018wait}, which is no longer applicable to the present case where validation test results have been included and general non-negative coefficients $\{ a_m \}$ are considered. Before proceeding, we present the following preparatory lemma. \par
\begin{lemma}
\label{lem:1}
For fixed values of $m$ and $N$ $(m<N)$, $B_N(t;m)$ is a strictly decreasing function in $t \in (0,1)$.
\end{lemma}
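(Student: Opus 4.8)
The plan is to prove strict monotonicity directly by differentiating $B_N(t;m)$ and showing that the derivative is strictly negative throughout $(0,1)$. Since $B_N(t;m) = \sum_{i=0}^m \binom{N}{i} t^i (1-t)^{N-i}$ is a polynomial in $t$, it is everywhere smooth, so a strictly negative derivative on the open interval is exactly what is needed.

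First I would differentiate a single summand. Writing $\frac{d}{dt}\big[\binom{N}{i}t^i(1-t)^{N-i}\big] = \binom{N}{i}\big[\,i\,t^{i-1}(1-t)^{N-i} - (N-i)\,t^i(1-t)^{N-i-1}\,\big]$ and invoking the elementary identities $i\binom{N}{i}=N\binom{N-1}{i-1}$ and $(N-i)\binom{N}{i}=N\binom{N-1}{i}$, I can recast the derivative of the $i$-th term as a difference $a_i - a_{i+1}$, where $a_i := N\binom{N-1}{i-1}t^{i-1}(1-t)^{N-i}$ (and $a_0 = 0$ because $\binom{N-1}{-1}=0$).

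The crucial step is to recognize the resulting telescoping structure: summing over $i = 0,\dots,m$ collapses the sum to $a_0 - a_{m+1}$, yielding
\[
\frac{d}{dt}B_N(t;m) = -\,N\binom{N-1}{m}\,t^m(1-t)^{N-1-m}.
\]
Finally, for $m < N$ the coefficient $\binom{N-1}{m}$ is a positive integer and the exponent $N-1-m$ is nonnegative, so for every $t \in (0,1)$ both $t^m > 0$ and $(1-t)^{N-1-m} > 0$; hence the derivative is strictly negative on all of $(0,1)$, which establishes that $B_N(t;m)$ is strictly decreasing and completes the argument.

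The main obstacle I anticipate is carrying out the telescoping cancellation precisely — specifically handling the index shift and the two boundary contributions (the $i=0$ summand vanishes, while the surviving term is the upper endpoint $a_{m+1}$), since an off-by-one error there would corrupt the clean closed form on which the sign conclusion rests. Beyond that bookkeeping, every remaining step is routine.
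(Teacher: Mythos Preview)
Your proof is correct. The telescoping computation is carried out accurately: using $i\binom{N}{i}=N\binom{N-1}{i-1}$ and $(N-i)\binom{N}{i}=N\binom{N-1}{i}$ the derivative of each summand becomes $a_i-a_{i+1}$ with $a_i = N\binom{N-1}{i-1}t^{i-1}(1-t)^{N-i}$, and the sum collapses to $-N\binom{N-1}{m}t^m(1-t)^{N-1-m}$, which is strictly negative on $(0,1)$ whenever $m<N$. The boundary bookkeeping (the $i=0$ term contributing zero, the surviving term being $a_{m+1}$) is handled correctly.

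As for comparison with the paper: the paper does not supply its own argument for this lemma but simply cites \cite{alamo2015randomized}. Your direct differentiation is a standard and fully self-contained route; it in fact recovers the well-known identity relating the derivative of the binomial cumulative distribution to a single binomial probability mass term, which is exactly what underlies the cited result. So your approach is not so much different as it is an explicit version of what the paper leaves to the reference.
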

\begin{proof}
See \cite{alamo2015randomized}.
\end{proof}

\begin{thm}
\label{thm:2}
Define the following polynomial in $t$:
\begin{equation}
\begin{split}
&\ h_{N,M}(t;k,l) \\
= &\ \beta\sum_{m=k}^N a_m \binom{m}{k} t^{m-k} - \binom{N}{k} t^{N-k} B_M(1 - t; l),
\label{eq:polynomial}
\end{split}
\end{equation}
where $\beta \in (0,1)$ is the confidence level, and coefficients $\{a_m\}$ satisfy
\begin{equation}
a_m\ge 0,\ m \in \mathbb{N}_{0:N},\ \sum_{m=\zeta}^{N-1} a_m > 0,\ \sum_{m=0}^N a_m = 1.
\label{eq:parcond}
\end{equation}

(i) $h_{N,M}(t;k,l)$ has only one solution in the interval $(0,1)$ for all $k \in \mathbb{N}_{0:\zeta}$ and $l \in \mathbb{N}_{0:M}$.

(ii) Letting the root be $t_{N,M}(k,l)$ and $\epsilon_{N,M}(k,l) = 1 - t_{N,M}(k,l)$, it then holds that:
\begin{equation}
\begin{split}
t_{N,M}(k,l) > t_{N,M}(k,l+1), \ \epsilon_{N,M}(k,l) < \epsilon_{N,M}(k,l+1), & \\
l \in \mathbb{N}_{0:M-1}. &
\end{split}
\label{eq:monoticity}
\end{equation}

(iii) The choice of above functions $\{ \epsilon_{N,M}(k,l) \}$ gives rise to the following finite-sample probabilistic guarantee:
\begin{equation*}
\mathbb{P}^{N+M} \left \{ V(\bold{x}_N^*) > \epsilon_{N,M}(s_N^*,r_M^*) \right \} \le \beta.
\end{equation*}
\end{thm}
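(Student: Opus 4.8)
The plan is to prove the three parts in order, since part (ii) exploits the sign structure established in (i), and part (iii) uses both. Throughout I would abbreviate the two pieces of (\ref{eq:polynomial}) as $P_k(t) := \sum_{m=k}^N a_m \binom{m}{k} t^{m-k}$ and $Q_{k,l}(t) := \binom{N}{k} t^{N-k} B_M(1-t;l)$, so that $h_{N,M}(t;k,l) = \beta P_k(t) - Q_{k,l}(t)$. The polynomial $P_k$ has non-negative coefficients, and since $k \le \zeta$, the condition $\sum_{m=\zeta}^{N-1} a_m > 0$ in (\ref{eq:parcond}) places strictly positive mass on some index $m$ with $k \le m \le N-1$; hence $P_k(t) > 0$ for all $t \in (0,1]$, which will let me divide by it freely.

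For part (i), I would settle existence by evaluating at the endpoints. Because $k \le \zeta < N$, the factor $t^{N-k}$ kills $Q_{k,l}$ at $t=0$, giving $h_{N,M}(0;k,l) = \beta a_k \ge 0$; and using $\binom{m}{k} \le \binom{N}{k}$ together with $\sum_m a_m = 1$ and $\beta < 1$ gives $h_{N,M}(1;k,l) = \beta \sum_{m=k}^N a_m \binom{m}{k} - \binom{N}{k} < 0$. Uniqueness is the crux, and I would obtain it by showing the ratio $w(t) := Q_{k,l}(t)/(\beta P_k(t))$ is strictly increasing on $(0,1)$, so that $w(t)=1$ (equivalently $h_{N,M}=0$) has at most one root. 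Comparing logarithmic derivatives, $w'/w = Q_{k,l}'/Q_{k,l} - P_k'/P_k$. A direct computation yields $P_k'/P_k = \langle m-k\rangle/t$, a weighted average of the exponents $m-k \in \{0,\dots,N-k\}$ with non-negative weights $a_m\binom{m}{k}t^{m-k}$; the coefficient condition guarantees positive weight on some $m \le N-1$, so this average is strictly below $(N-k)/t$. On the other side $Q_{k,l}'/Q_{k,l} = (N-k)/t + \tfrac{d}{dt}B_M(1-t;l)\,/\,B_M(1-t;l) \ge (N-k)/t$, because Lemma~\ref{lem:1} forces $\tfrac{d}{dt}B_M(1-t;l) \ge 0$. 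Combining the two bounds gives $w'/w > 0$ on $(0,1)$, hence uniqueness. The same coefficient condition also excludes the degenerate case $a_k = \dots = a_{N-1} = 0$, which would make $w(0^+) \ge 1/\beta > 1$ and push the crossing out of the open interval; ruling it out keeps the root genuinely in $(0,1)$.

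For part (ii), I would observe that $B_M(1-t;l)$ is non-decreasing in $l$, since passing from $l$ to $l+1$ adds the non-negative term $\binom{M}{l+1}(1-t)^{l+1}t^{M-l-1}$, strictly positive for $t \in (0,1)$. Consequently $h_{N,M}(\cdot;k,l+1) < h_{N,M}(\cdot;k,l)$ pointwise on $(0,1)$. Evaluating the larger function at its own root gives $h_{N,M}(t_{N,M}(k,l);k,l+1) < 0$; since by part (i) the function $h_{N,M}(\cdot;k,l+1)$ is positive to the left of its unique root and negative to the right, the point $t_{N,M}(k,l)$ must sit to the right of $t_{N,M}(k,l+1)$. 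This is exactly $t_{N,M}(k,l+1) < t_{N,M}(k,l)$, i.e. (\ref{eq:monoticity}).

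For part (iii), the plan is to invoke Theorem~\ref{thm:1} by exhibiting a feasible certificate for (\ref{eq:beta}) of value $\beta$. I would take $g(t) = \beta \sum_{m=0}^N a_m t^m \in \bold{C}^{\zeta}[0,1]$, so that $g(1) = \beta \sum_m a_m = \beta$ and, by the differentiation identity used in the proof of Theorem~\ref{thm:1}, $\tfrac{1}{k!}\tfrac{d^k}{dt^k} g(t) = \beta P_k(t)$. To verify the constraint, fix $t$ and $k$; by the monotonicity (\ref{eq:monoticity}) the indicators $\bold{1}_{[0,t_{N,M}(k,l))}(t)$ are active precisely for $l = 0,\dots,j$, where $j$ is the largest index with $t < t_{N,M}(k,j)$ (the sum being empty otherwise). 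The left-hand side then collapses to $\binom{N}{k} t^{N-k} B_M(1-t;j) = Q_{k,j}(t)$, and since $t$ lies to the left of the root $t_{N,M}(k,j)$, part (i) gives $h_{N,M}(t;k,j) \ge 0$, i.e. $Q_{k,j}(t) \le \beta P_k(t) = \tfrac{1}{k!}g^{(k)}(t)$. Thus $g$ is feasible, the optimal value of (\ref{eq:beta}) is at most $\beta$, and Theorem~\ref{thm:1} yields the stated guarantee. The main obstacle I anticipate is the uniqueness in part (i): the logarithmic-derivative comparison is clean in spirit but hinges on correctly reading the sign of $\tfrac{d}{dt}B_M(1-t;l)$ off Lemma~\ref{lem:1} and, more delicately, on invoking $\sum_{m=\zeta}^{N-1} a_m > 0$ twice — once to make the weighted-exponent average strictly less than $N-k$, and once to exclude all mass sitting at $m=N$ so the root stays interior.
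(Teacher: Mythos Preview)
Your argument is correct and structurally matches the paper's proof: parts (ii) and (iii) are essentially identical to what the paper does, including the choice $g(t)=\beta\sum_m a_m t^m$ and the interval-splitting verification of feasibility in (\ref{eq:beta}).

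The only genuine difference is in part (i). You establish uniqueness via a logarithmic-derivative comparison for the ratio $w=Q_{k,l}/(\beta P_k)$, bounding $P_k'/P_k$ above and $Q_{k,l}'/Q_{k,l}$ below by $(N-k)/t$. The paper instead multiplies $h_{N,M}$ by $t^{k-N}$ to obtain
\[
\tilde h_{N,M}(t;k,l)=\beta\sum_{m=k}^N a_m\binom{m}{k}t^{m-N}-\binom{N}{k}B_M(1-t;l),
\]
and observes that this is strictly decreasing on $(0,1)$: the first sum contains a genuinely negative power of $t$ thanks to $\sum_{m=\zeta}^{N-1}a_m>0$, and $-B_M(1-t;l)$ is non-increasing by Lemma~\ref{lem:1}. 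Since $\tilde h\to+\infty$ as $t\to0^+$ and $\tilde h(1)<0$, the unique root in $(0,1)$ follows immediately. This rescaling trick is shorter and sidesteps the log-derivative bookkeeping; your approach reaches the same conclusion but with more machinery.

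One small point to tighten: your endpoint check gives $h_{N,M}(0;k,l)=\beta a_k\ge 0$, which is merely $\ge 0$ when $a_k=0$, so the intermediate-value argument does not by itself place the root in the \emph{open} interval. The paper avoids this by working with $\tilde h$, whose limit at $0^+$ is $+\infty$ regardless of $a_k$. In your framework the fix is to note that $w(0^+)=0$ whenever some $a_m>0$ with $k\le m\le N-1$ (guaranteed by (\ref{eq:parcond})), so the crossing $w=1$ indeed occurs in $(0,1)$; your closing remark about the degenerate case gestures at this but conflates it with the separate issue of all mass at $m=N$.
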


\begin{proof} (i) First we define the following polynomial:
\begin{equation*}
\begin{split}
&\ \tilde{h}_{N,M}(t;k,l) \\
\triangleq &\  \beta\sum_{m=k}^N a_m \cdot \binom{m}{k} \cdot t^{m-N} - \binom{N}{k} B_M(1 - t; l) \\
                   =&\ h_{N,M}(t;k,l) \cdot t^{k-N}
\end{split}
\end{equation*}
Since $k\le \zeta<N$ and $\sum_{m=\zeta}^N a_m > 0$, $\sum_{m=k}^{N-1} a_m \cdot \binom{m}{k} \cdot t^{m-N}$ is a strictly decreasing function in $t \in (0,1)$. In addition, according to Lemma 1, $-B_M(1 - t; l)$ is also strictly decreasing in $t$ when $l < M$, and $B_M(1 - t; l) \equiv 1$ when $l = M$. Therefore, in both cases, $\tilde{h}_{N,M}(t;k,l)$ must be strictly decreasing in $(0,1)$. Meanwhile, notice that
\begin{gather*}
\lim_{t \to 0^+} \tilde{h}_{N,M}(t;k,l) = + \infty, \\
\begin{split}
\tilde{h}_{N,M}(1;k,l) & = \beta \sum_{m=k}^N a_m \cdot \binom{m}{k} - \binom{N}{k}B_M(0;l) \\
& \le \beta \sum_{m=0}^N a_m \cdot \binom{N}{k} - \binom{N}{k} \\
& = (\beta - 1)\binom{N}{k} < 0
\end{split}
\end{gather*}
Hence, $\tilde{h}_{N,M}(t;k,l)$ has exactly one root in $(0,1)$, and so does $h_{N,M}(t;k,l)$.

(ii) From the definition it is obvious that $B_M(1 - t; l) < B_M(1 - t; l+1)$, which implies $\tilde{h}_{N,M}(t;k,l) > \tilde{h}_{N,M}(t;k,l+1)$. Since $\tilde{h}_{N,M}(t;k,l)$ is strictly decreasing in $t \in (0,1)$, we then have \eqref{eq:monoticity}.

(iii) We first parameterize a candidate solution to (\ref{eq:beta}) as:
\begin{equation}
g(t) = \beta \sum_{m=0}^N a_m \cdot t^m.
\label{eq:candidate}
\end{equation}
Then we will show that $g(t)$ is essentially a feasible solution to (\ref{eq:beta}) with $\epsilon(k,l)$ set as $\epsilon_{N,M}(k,l)$. Its $k$-th order derivative can be computed as:
\begin{equation}
\frac{1}{k!} \frac{{\rm d}^k}{{\rm d}t^k}g(t) = \beta \sum_{m=k}^N a_m \cdot \binom{m}{k}t^{m-k},\ k \in \mathbb{N}_{1:\zeta}.
\end{equation}
Because $\epsilon_{N,M}(k,l+1) > \epsilon_{N,M}(k,l)$, the entire interval $[0,1 - \epsilon_{N,M}(k,0))$ can be split as
\begin{equation*}
\begin{split}
&\ [0,1 - \epsilon_{N,M}(k,0)) \\
= &\ \bigcup_{l=0}^M [1 - \epsilon_{N,M}(k,l+1),1 - \epsilon_{N,M}(k,l)),
\end{split}
\end{equation*}
where $\epsilon_{N,M}(k,M+1) = 1$ is taken for granted for notational clarity. Considering the case where $t$ falls within a certain interval $[1 - \epsilon_{N,M}(k,l+1),1 - \epsilon_{N,M}(k,l))$, we have:
\begin{equation*}
\begin{split}
& \frac{1}{k!} \frac{{\rm d}^k}{{\rm d}t^k}g(t) \\
&\ - \binom{N}{k} \sum_{j=0}^M \binom{M}{j} t^{N-k+M-j} (1-t)^{j} \bold{1}_{[0,1-\epsilon_{N,M}(k,j))} (t) \\
= &\ \frac{1}{k!} \frac{{\rm d}^k}{{\rm d}t^k}g(t) - \binom{N}{k} \sum_{j=0}^l \binom{M}{j} t^{N-k+M-j} (1-t)^{j} \\
= &\ \beta \sum_{m=k}^N a_m \cdot \binom{m}{k}t^{m-k} - \binom{N}{k} t^{N-k} B_M(1-t;l) \\
= &\ h_{N,M}(t;k,l) \\
> &\ 0
\end{split}
\end{equation*}
The last inequality is due to the fact that $t_{N,M}(k,l) = 1 - \epsilon_{N,M}(k,l)$ is the root of $h_{N,M}(t;k,l)$, and for $t \in [0, 1 - \epsilon_{N,M}(k,l))$, $h_{N,M}(t;k,l)>0$ holds. Therefore, if we set $\epsilon(k,l)$ in (\ref{eq:beta}) to be $\epsilon_{N,M}(k,l)$, constraints in (\ref{eq:beta}) hold for all $t \in [0,1]$. As such, $g(t)$ is feasible for problem (\ref{eq:beta}), which indicates $\beta^* \le \beta$. Finally we have:
\begin{equation*}
\mathbb{P}^{N+M} \left \{ V(\bold{x}_N^*) > \epsilon_{N,M}(s_N^*,r_M^*) \right \} \le \beta^* \le \beta.
\end{equation*}
\end{proof}

The following remarks are made in order.
\begin{rem}
The utilization of a posteriori probabilistic certificates $\{ \epsilon_{N,M}(k,l) \}$ does not require any distributional information of uncertainty. Such a distribution-free characteristic makes the proposed bounds particularly suitable for data-driven decision-making.
\end{rem}

\begin{rem}
The utilization of a posteriori probabilistic certificates $\{ \epsilon_{N,M}(k,l) \}$ should not be oriented towards a tradeoff design with a prescribed tolerance on $V(\bold{x}_N^*)$. Rather, they shall be used for a posteriori risk judgement of a scenario-based solution. Of common interest, a scenario program ${\rm SP}_N[\omega_N]$ may be constructed as a relaxation of the infinite-dimensional constraint $f(\bold{x}, \delta) \le 0,\ \forall \delta \in \Delta$, or as an approximation of the chance constraint $\mathbb{P}\{f(\bold{x}, \delta) > 0 \} \le \epsilon$ with the sample size $N$ determined based on the prior bound \eqref{eq:campicalafi}. However, the computational effort for solving ${\rm SP}_N[\omega_N]$ may be unaffordable, and one has to settle for a reduced sample size but leverages unused samples to evaluate the risk. Another situation is that after implementing the randomized solution, one may be interested in further evaluating its risk with new samples successively accumulated. In such situations, $\{ \epsilon_{N,M}(k,l) \}$ show greater promise than the wait-and-judge bounds $\{ \epsilon_{N}(k) \}$ in addressing the practical need for risk evaluation.
\end{rem}

\begin{rem}
The monotonicity property (\ref{eq:monoticity}) is consistent with our intuition that \textit{the lower the empirical violation frequency, the lower the a posteriori violation probability.} This can be also evidenced from the example in Fig. \ref{fig:grid}, where $\{\epsilon_{N,M}(k,l)\}$ with $\beta = 10^{-6}$, $\zeta = 10$, $N = 50$, $M = 30$, and $\{ a_m \equiv 1/(N+1) \}$ are portrayed. Meanwhile, it is easy to prove that $\{\epsilon_{N,M}(k,l)\}$ are increasing in $k$, which preserves the interpretation of generic wait-and-judge approach \cite{campi2018wait} that \textit{less support constraints indicate a potentially lower risk of randomized solutions}.
\end{rem}

\begin{figure}
  \centering
  % Requires \usepackage{graphicx}
  \includegraphics[width=0.4\textwidth]{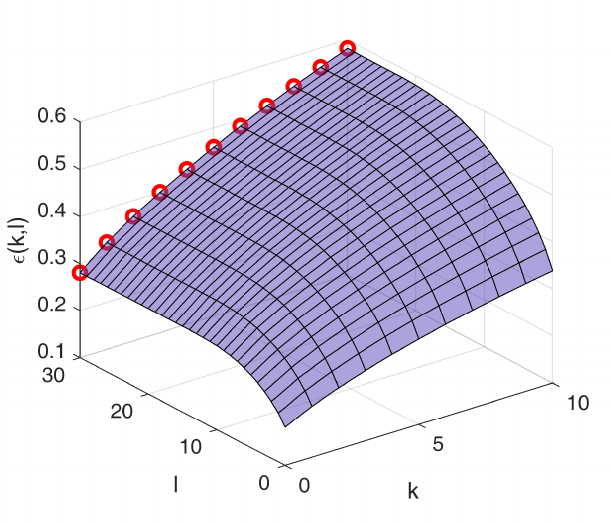}\\
  \caption{A posteriori bounds with $\beta = 10^{-6}$, $\zeta = 10$, $N = 50$, $M = 30$, and $a_m \equiv 1/(N+1)$. Note that $\{ \epsilon(\cdot,M) \}$ (marked in red circles) coincide with the a posteriori bounds in the wait-and-judge approach $\{ \epsilon(\cdot) \}$ \cite{campi2018wait}.}
  \label{fig:grid}
\end{figure}

Note that bounds $\{ \epsilon_{N,M}(k,l) \}$ and $\{ t_{N,M}(k,l) \}$ derived in Theorem \ref{thm:2} depend on $N$ and $M$. For notational simplicity, we will use $\{ \epsilon(k,l) \}$ and $\{ t(k,l) \}$ in the sequel when no confusions are caused. Meanwhile, we use $\{ \epsilon(k) \}$ to denote the a posteriori bounds in the wait-and-judge approach \cite{campi2018wait} that depends on $s_N^*$ only. Now we establish a result that reveals the value of utilizing more validation information in $\{ \epsilon_{N,M}(k,l) \}$.

\begin{clr}
\label{cor:1}
With $\{ a_m \}$ and $\beta$ fixed, it then holds that:
\begin{equation}
\begin{split}
\epsilon(k) = &\ \epsilon_{N,0}(k,0) \\
= &\ \epsilon_{N,M}(k,M) \\
> &\ \epsilon_{N,M}(k,l),\ k \in \mathbb{N}_{0:\zeta},\ l \in \mathbb{N}_{0:M-1}
\end{split}
\end{equation}
\end{clr}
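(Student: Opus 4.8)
The plan is to exploit the fact that the validation outcome enters the defining polynomial $h_{N,M}(t;k,l)$ in \eqref{eq:polynomial} \emph{only} through the factor $B_M(1-t;l)$; consequently the whole corollary reduces to evaluating this factor at its two extreme configurations, namely $l=0$ together with $M=0$, and $l=M$, and then invoking the monotonicity already proved in Theorem \ref{thm:2}(ii). I would treat the chain $\epsilon(k)=\epsilon_{N,0}(k,0)=\epsilon_{N,M}(k,M)>\epsilon_{N,M}(k,l)$ as three separate claims, each dispatched by one short observation.

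For the first equality, I would recall that $\epsilon(k)$ is, by construction, the quantity $\epsilon_{N,0}(k,0)$: when $M=0$ the only admissible index is $l=0$, and $B_0(1-t;0)=1$, so \eqref{eq:polynomial} collapses to $h_{N,0}(t;k,0)=\beta\sum_{m=k}^N a_m\binom{m}{k}t^{m-k}-\binom{N}{k}t^{N-k}$, which is exactly the polynomial whose root defines the wait-and-judge bound (reducing to \cite{campi2018wait} when $a_m\equiv 1/(N+1)$); hence the two roots coincide. For the second equality I would use the binomial theorem: since
\[
B_M(1-t;M)=\sum_{i=0}^M\binom{M}{i}(1-t)^i t^{M-i}=\big((1-t)+t\big)^M=1,
\]
substituting $l=M$ into \eqref{eq:polynomial} yields $h_{N,M}(t;k,M)=\beta\sum_{m=k}^N a_m\binom{m}{k}t^{m-k}-\binom{N}{k}t^{N-k}$, i.e.\ $h_{N,M}(t;k,M)\equiv h_{N,0}(t;k,0)$ as polynomials in $t$. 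Because Theorem \ref{thm:2}(i) guarantees that each has a \emph{unique} root in $(0,1)$, identical polynomials force $t_{N,M}(k,M)=t_{N,0}(k,0)$, and therefore $\epsilon_{N,M}(k,M)=\epsilon_{N,0}(k,0)$.

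The final strict inequality is then immediate from the monotonicity \eqref{eq:monoticity} established in Theorem \ref{thm:2}(ii): one has $\epsilon_{N,M}(k,l)<\epsilon_{N,M}(k,l+1)$ for every $l\in\mathbb{N}_{0:M-1}$, so chaining these inequalities from $l$ up to $M$ gives $\epsilon_{N,M}(k,l)<\epsilon_{N,M}(k,M)$ for all $l<M$. I do not anticipate any serious obstacle here, since the entire argument rests on recognizing the two binomial collapses ($B_0(\cdot;0)\equiv 1$ and $B_M(\cdot;M)\equiv 1$) and on the already-proven uniqueness and monotonicity. The only point requiring care is to explicitly invoke the uniqueness of the root from Theorem \ref{thm:2}(i), so that ``identical polynomials'' legitimately translates into ``identical bounds'' rather than merely ``a common root''.
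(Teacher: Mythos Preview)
Your proposal is correct and follows essentially the same approach as the paper: the paper's proof simply notes that the second equality follows from substituting the identity $B_M(1-t;M)=1$ into \eqref{eq:polynomial} and that the inequality stems from \eqref{eq:monoticity}. Your write-up is more explicit (spelling out the $B_0(\cdot;0)\equiv 1$ collapse for the first equality and invoking uniqueness of the root), but the underlying argument is identical.
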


\begin{proof}
The second equality can be obtained by substituting the identity $B_M(1-t;M) = 1$ into (\ref{eq:polynomial}), and the inequality stems from (\ref{eq:monoticity}).
\end{proof}

Corollary \ref{cor:1} shows that the proposed bound $\epsilon(s_N^*, r_M^*)$ is always no higher than $\epsilon(s_N^*)$ that depends on partial information within $N$ scenarios, as can be observed from Fig. \ref{fig:grid}. This is an expected outcome because having more information implies having a more accurate evaluation of a solution's risk. Moreover, even for a large $l$, $\epsilon(k,l) \le \epsilon(k)$ still holds true, which admits a sensible explanation. It has been revealed in \cite{campi2018wait,garatti2019risk} that with high confidence $r_M^* / M$ cannot be much higher than $s_N^* / N$ due to the correctness of the wait-and-judge bound $\mathbb{P}^{N} \left \{ V(\bold{x}_N^*) > \epsilon(s_N^*) \right \} \le \beta$. As a consequence, seeing an excessively large $r_M^*$ is issued with a negligible probability, so the change in $\epsilon(k,l)$ with large $l$ does not exert significant influence on the overall confidence $\beta$. Similarly, as implied by \cite{garatti2019risk}, the joint distribution of $(s_N^*,r_M^*)$ is concentrated in the region where $V(\bold{x}_N^*) = \mathbb{E}\{ r_M^* \} / M$ and $s_N^* / N$ have the same order of magnitude. This is also the region where the actual improvement of $\epsilon(k,l)$ over $\epsilon(k)$ occurs, as also confirmed by Fig. \ref{fig:grid}. In this sense, $\epsilon(k,l)$ shall be viewed as a refinement of $\epsilon(k)$ based on the new information carried by $M$ validation samples.

% If $r_M^* = M$ with a large $M$, the conclusion $V(\bold{x}_N^*) \le \epsilon(s_N^*,M)$ is incredible. However, Corollary \ref{cor:1} is still correct because $r_M^* = M$ occurs with only negligible probability, and the change in $\epsilon(k,M)$ does not lead to significant changes in the confidence $\beta$. Rather, the actual improvement of $\epsilon(k,l)$ arises from the region where the joint distribution of $(s_N^*,r_M^*)$ is mostly concentrated. As shown in \cite{garatti2019risk} and sequent case studies, $s_N^* / N \approx r_M^* / M$ appears with high confidence, and low values of $\epsilon(s_N^*, r_M^*)$ are attained to secure the confidence $\beta$.

Next we investigate the \textit{incremental monotonicity} of generic one-sided C-P bounds $\{\eta_{M}(l)\}$, which turns out to be well inherited by $\{ \epsilon_{N,M}(k,l) \}$. This can be precisely stated as follows. Suppose that upon obtaining $\bold{x}_N^*$, $k$ support constraints have been found, and $l$ violations have occurred on $M$ validation samples $\{ \tilde{\delta}^{(j)} \}$, giving rise to the one-sided C-P bound $\eta_{M}(l)$. Afterwards, a validation sample $\tilde{\delta}^{(M+1)}$ carrying new information arrives, based on which the new probabilistic bound $\eta_{M+1}(\cdot)$ is derived. More exactly, the new bound shall be $\eta_{M+1}(l+1)$ if $f(\bold{x}^*_N,\tilde{\delta}^{(M+1)}) > 0$, and $\eta_{M+1}(l)$ if $f(\bold{x}^*_N,\tilde{\delta}^{(M+1)}) \le 0$. Before proceeding, the following lemma is presented.
\begin{lemma}
\label{lemma:app2}
For fixed values of $t$ and $m$, $B_N(t;m)$ is strictly decreasing in $N\ (N \ge m)$.
\end{lemma}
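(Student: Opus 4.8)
The plan is to use the probabilistic reading of $B_N(t;m)$: it is exactly $\mathbb{P}\{S_N \le m\}$, where $S_N$ denotes a Binomial$(N,t)$ random variable, i.e. the number of successes in $N$ independent Bernoulli trials each succeeding with probability $t$. Since $N$ ranges over the integers, it suffices to prove the single-step strict inequality $B_{N+1}(t;m) < B_N(t;m)$ for every $N \ge m$; strict monotonicity in $N$ then follows at once by transitivity.

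First I would couple the two binomials on a common probability space. Let $X_1,\dots,X_{N+1}$ be i.i.d. Bernoulli$(t)$ and set $S_N = \sum_{i=1}^N X_i$ and $S_{N+1} = S_N + X_{N+1}$. Because $X_{N+1}\ge 0$, one has $\{S_{N+1}\le m\}\subseteq\{S_N\le m\}$, so $B_{N+1}(t;m)\le B_N(t;m)$ follows immediately, and the two probabilities differ precisely by the probability of the threshold-crossing event $\{S_N=m\}\cap\{X_{N+1}=1\}$.

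The crux, and the only place where the hypotheses $t\in(0,1)$ and $N\ge m$ enter, is upgrading this from $\le$ to $<$. By independence, $\mathbb{P}\{S_N=m,\,X_{N+1}=1\}=\binom{N}{m}t^m(1-t)^{N-m}\cdot t=\binom{N}{m}t^{m+1}(1-t)^{N-m}$, which is strictly positive whenever $0<t<1$ and $N\ge m$. Hence $B_N(t;m)-B_{N+1}(t;m)=\binom{N}{m}t^{m+1}(1-t)^{N-m}>0$, which is exactly the claimed strict decrease.

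As an alternative that avoids probabilistic language, the same closed form can be obtained purely algebraically: applying Pascal's rule $\binom{N+1}{i}=\binom{N}{i}+\binom{N}{i-1}$ termwise in the definition of $B_{N+1}(t;m)$ and re-indexing the shifted sum yields the recursion $B_{N+1}(t;m)=(1-t)B_N(t;m)+t\,B_N(t;m-1)$; substituting $B_N(t;m-1)=B_N(t;m)-\binom{N}{m}t^m(1-t)^{N-m}$ collapses it to $B_{N+1}(t;m)=B_N(t;m)-\binom{N}{m}t^{m+1}(1-t)^{N-m}$, recovering the explicit gap. I expect the only obstacle to be securing strictness rather than weak monotonicity; both routes reduce it to the transparent positivity of the single term $\binom{N}{m}t^{m+1}(1-t)^{N-m}$ once $t$ is confined to the open interval.
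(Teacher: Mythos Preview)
Your proof is correct. The algebraic alternative you sketch at the end is exactly the paper's argument: the paper applies Pascal's rule to obtain the recursion $B_{N+1}(t;m)=(1-t)B_N(t;m)+t\,B_N(t;m-1)$ and then concludes from $B_N(t;m-1)<B_N(t;m)$, stopping one line short of your explicit gap formula. Your primary route, the probabilistic coupling of $S_{N+1}=S_N+X_{N+1}$, is a genuinely different and somewhat cleaner argument: it bypasses the binomial-identity manipulation and renders the strict inequality as the positivity of the single threshold-crossing probability $\mathbb{P}\{S_N=m,\,X_{N+1}=1\}$. Both routes produce the identical gap $\binom{N}{m}t^{m+1}(1-t)^{N-m}$; the coupling explains \emph{why} the gap takes this form, while the paper's purely algebraic derivation has the modest advantage of staying entirely within the combinatorial notation already in use.
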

\begin{proof}
For $i \in \mathbb{N}_{1:N}$, it is obvious that $$\binom{N+1}{i} = \binom{N}{i} + \binom{N}{i-1}.$$ Therefore, we have:
\begin{equation*}
\begin{split}
&\ B_{N+1}(t;m) \\
= &\ \sum_{i=0}^m \binom{N+1}{i}t^i(1-t)^{N+1-i} \\
= &\ \underbrace{(1 - t)^{N+1} + \sum_{i=1}^m \binom{N}{i}t^i(1-t)^{N+1-i}}_{=(1-t)B_N(t;m)} \\
&\ + \underbrace{\sum_{i=1}^m \binom{N}{i - 1}t^i(1-t)^{N+1-i}}_{=t B_N(t; m - 1)} \\
= &\ (1-t)B_N(t;m) + t B_N(t; m - 1) \\
< &\ (1-t)B_N(t;m) + t B_N(t;m) \\
= &\ B_N(t;m)
\end{split}
\end{equation*}
This completes the proof of Lemma \ref{lemma:app2}.
\end{proof}

Then the following relations hold between $\{ \eta_M(\cdot) \}$ and $\{ \eta_{M+1}(\cdot) \}$.
\begin{thm}
\label{thm:33}
It holds that:
\begin{gather}
\eta_{M}(l) > \eta_{M+1}(l),\ l \in \mathbb{N}_{0:M}, \\
\eta_{M+1}(l+1) > \eta_{M}(l),\ l \in \mathbb{N}_{0:M-1}, \\
\eta_{M+1}(M+1) = \eta_{M}(M) = 1. \label{eq:26}
\end{gather}
\end{thm}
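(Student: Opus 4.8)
The plan is to reduce every assertion to the two monotonicity lemmas already proved—Lemma \ref{lem:1} (strict decrease of $B_M(t;m)$ in $t$) and Lemma \ref{lemma:app2} (strict decrease of $B_N(t;m)$ in $N$)—supplemented by a single Pascal-type recursion. The cornerstone observation is that for $l < M$ the quantity $\eta_M(l)$ in (\ref{eq:pmk}) is exactly the unique root in $(0,1)$ of the equation $B_M(\eta;l) = \beta^*$: the map $B_M(\cdot;l)$ is continuous and, by Lemma \ref{lem:1}, strictly decreasing, with boundary values $B_M(0;l) = 1$ and $B_M(1;l) = 0$, so the level $\beta^* \in (0,1)$ is attained precisely once. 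Consequently any comparison between two $\eta$-values can be converted into a comparison of the monotone function $B$ evaluated at a common argument, and the minimum in (\ref{eq:pmk}) is genuinely attained.

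With this characterization in hand I would dispatch the three assertions in turn. Equality (\ref{eq:26}) is immediate from the definition (\ref{eq:pmk}), which assigns the value $1$ whenever the failure count equals the sample size. For the first inequality $\eta_M(l) > \eta_{M+1}(l)$ with $l < M$, I would evaluate $B_{M+1}$ at $t = \eta_M(l)$: Lemma \ref{lemma:app2} gives $B_{M+1}(\eta_M(l);l) < B_M(\eta_M(l);l) = \beta^*$, so the constraint defining $\eta_{M+1}(l)$ already holds strictly at $\eta_M(l)$; since $B_{M+1}(\cdot;l)$ is strictly decreasing, its root lies strictly to the left, i.e. $\eta_{M+1}(l) < \eta_M(l)$. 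The boundary case $l = M$ is treated separately, noting $\eta_M(M) = 1$ while $B_{M+1}(1;M) = 0 < \beta^*$ forces $\eta_{M+1}(M) < 1$.

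The second inequality $\eta_{M+1}(l+1) > \eta_M(l)$ is the crux, and here I would invoke the recursion established inside the proof of Lemma \ref{lemma:app2}, namely $B_{M+1}(t;l+1) = (1-t)B_M(t;l+1) + t\,B_M(t;l)$. Evaluating at $t = \eta_M(l) \in (0,1)$ and using the elementary fact $B_M(t;l+1) = B_M(t;l) + \binom{M}{l+1} t^{l+1}(1-t)^{M-l-1} > B_M(t;l)$ (valid since $l+1 \le M$ and $t \in (0,1)$), the recursion yields $B_{M+1}(\eta_M(l);l+1) > (1-t)B_M(t;l) + t\,B_M(t;l) = \beta^*$. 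Thus the constraint defining $\eta_{M+1}(l+1)$ is violated at $\eta_M(l)$, and strict monotonicity of $B_{M+1}(\cdot;l+1)$ in $t$ pushes its root strictly to the right, giving $\eta_{M+1}(l+1) > \eta_M(l)$.

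I expect the main obstacle to be precisely this second claim: the naive approach fails because both the sample size and the failure count increase simultaneously, so neither lemma alone determines the sign of the change. The decisive step is recognizing that the Pascal recursion already embedded in the proof of Lemma \ref{lemma:app2} decomposes $B_{M+1}(\cdot;l+1)$ into a convex combination of $B_M(\cdot;l+1)$ and $B_M(\cdot;l)$, after which the trivial inequality $B_M(t;l+1) > B_M(t;l)$ closes the argument. The only technical care required is to verify that all evaluation points satisfy $t \in (0,1)$, which is guaranteed by the root characterization in the first paragraph.
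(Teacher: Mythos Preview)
Your proof is correct. The treatment of (\ref{eq:26}) and of the first inequality coincides with the paper's: both rely on the root characterization of $\eta_M(l)$ via Lemma~\ref{lem:1} and then invoke Lemma~\ref{lemma:app2} to conclude $B_{M+1}(\eta;l) < B_M(\eta;l)$, hence $\eta_{M+1}(l) < \eta_M(l)$. Your explicit handling of the boundary case $l=M$ is a nice touch.

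Where you diverge is in the second inequality $\eta_{M+1}(l+1) > \eta_M(l)$. The paper does not use the Pascal recursion directly; instead it exploits the binomial complement identity $1 - B_M(\eta;l) = B_M(1-\eta;M-l-1)$ to rewrite the target inequality as $B_{M+1}(1-\eta;M-l-1) < B_M(1-\eta;M-l-1)$, which is then another straight application of Lemma~\ref{lemma:app2}. Your route---expanding $B_{M+1}(t;l+1)$ via the recursion $B_{M+1}(t;l+1) = (1-t)B_M(t;l+1) + t\,B_M(t;l)$ already derived inside the proof of Lemma~\ref{lemma:app2}, and then using the trivial bound $B_M(t;l+1) > B_M(t;l)$---is equally valid and arguably more self-contained, since it avoids introducing the symmetry identity. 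The paper's approach, on the other hand, has the aesthetic advantage of reducing \emph{both} inequalities to the single statement of Lemma~\ref{lemma:app2}, with no additional computation.
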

\begin{proof}
(\ref{eq:26}) immediately follows from (\ref{eq:pmk}). Because $\eta_{M}(l)$ is the root of $B_M(\eta,l) = \beta^*$ in $(0,1)$, and $B_M(\eta,l)$ is strictly decreasing in $\eta$, it remains to show that:
\begin{gather}
B_M(\eta; l) > B_{M + 1}(\eta; l),\ l \in \mathbb{N}_{0:M}, \label{eq:29} \\
B_{M+1}(\eta; l+1) > B_M(\eta; l),\ l \in \mathbb{N}_{0:M-1}. \label{eq:30}
\end{gather}
A straightforward usage of Lemma \ref{lemma:app2} results in (\ref{eq:29}), along with the inequality $B_{M+1}(1 - \eta; M-l-1) < B_M(1 - \eta; M-l-1),\ l \in \mathbb{N}_{0:M-1}$. Hence, $1 - B_{M + 1}(1 - \eta; M-l-1) > 1 - B_M(1 - \eta;M-l-1)$ holds, which amounts to (\ref{eq:30}).
\end{proof}

In view of Theorem \ref{thm:33}, rational refinements of $\epsilon(s_N^*, r_M^*)$ can be made once more validation samples are successively accumulated. If constraint violation is witnessed on a new validation sample, a higher risk level of $\bold{x}_N^*$ is obtained, which is precisely stated as $\eta_{M+1}(l+1) > \eta_{M}(l)$. Conversely, if $\bold{x}_N^*$ agrees with the new validation sample, the risk level should be discounted, i.e. $\eta_{M+1}(l) < \eta_{M}(l)$. Interestingly, such properties are also possessed by $\{ \epsilon_{N,M}(k,l) \}$.

\begin{thm}
\label{thm:4}
With values of $\{ a_m \}$ and $\beta$ fixed, the roots of $\{ g_{N,M}(t;k,l) \}$ satisfy
\begin{gather*}
t_{N,M}(k,l) < t_{N,M+1}(k,l),\ l \in \mathbb{N}_{0:M}, \\
t_{N,M+1}(k,l+1) < t_{N,M}(k,l),\ l \in \mathbb{N}_{0:M-1}, \\
t_{N,M+1}(k,M+1) = t_{N,M}(k,M),
\end{gather*}
thereby indicating the following relations:
\begin{gather}
\epsilon_{N,M}(k,l) > \epsilon_{N,M+1}(k,l),\ l \in \mathbb{N}_{0:M}, \label{eq:thm31} \\
\epsilon_{N,M+1}(k,l+1) > \epsilon_{N,M}(k,l),\ l \in \mathbb{N}_{0:M-1}, \label{eq:thm32} \\
\epsilon_{N,M+1}(k,M+1) = \epsilon_{N,M}(k,M). \label{eq:thm33}
\end{gather}
\end{thm}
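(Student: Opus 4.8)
The plan is to leverage the auxiliary polynomial $\tilde{h}_{N,M}(t;k,l) = h_{N,M}(t;k,l)\cdot t^{k-N}$ introduced in the proof of Theorem~\ref{thm:2}(i), which is strictly decreasing on $(0,1)$ and shares its unique root $t_{N,M}(k,l)$ with $h_{N,M}(t;k,l)$. The workhorse is a comparison principle: because each $\tilde{h}_{N,M}(\cdot;k,l)$ is strictly monotone, the ordering of two such roots is determined entirely by the sign of one polynomial evaluated at the other's root (a positive value at a competitor's root forces that competitor's root to lie further right, a negative value further left). The key structural observation that makes every comparison tractable is that the first summand $\beta\sum_{m=k}^N a_m\binom{m}{k}t^{m-N}$ of $\tilde{h}_{N,M}(t;k,l)$ is \emph{independent of $M$}; hence every difference taken across $M$ collapses onto the Beta-function terms $B_M(1-t;\cdot)$ versus $B_{M+1}(1-t;\cdot)$, which are exactly the objects already governed by Lemma~\ref{lemma:app2} and Theorem~\ref{thm:33}.

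First I would establish $t_{N,M}(k,l) < t_{N,M+1}(k,l)$. Subtracting, $\tilde{h}_{N,M}(t;k,l) - \tilde{h}_{N,M+1}(t;k,l) = \binom{N}{k}\bigl[B_{M+1}(1-t;l) - B_M(1-t;l)\bigr]$, which is negative by Lemma~\ref{lemma:app2} (strict decrease in the Beta index $M$). Evaluating at the old root gives $\tilde{h}_{N,M+1}(t_{N,M}(k,l);k,l) > \tilde{h}_{N,M}(t_{N,M}(k,l);k,l) = 0$, so by monotonicity the new root lies strictly to the right, proving the claim and hence $\epsilon_{N,M}(k,l) > \epsilon_{N,M+1}(k,l)$. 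The mirror inequality $t_{N,M+1}(k,l+1) < t_{N,M}(k,l)$ proceeds identically: the relevant difference $\tilde{h}_{N,M}(t;k,l) - \tilde{h}_{N,M+1}(t;k,l+1) = \binom{N}{k}\bigl[B_{M+1}(1-t;l+1) - B_M(1-t;l)\bigr]$ is now \emph{positive} by inequality~\eqref{eq:30} of Theorem~\ref{thm:33}, so $\tilde{h}_{N,M+1}(t_{N,M}(k,l);k,l+1) < 0$, forcing the new root to lie strictly to the left and yielding $\epsilon_{N,M+1}(k,l+1) > \epsilon_{N,M}(k,l)$.

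For the boundary equality I would simply note $B_{M+1}(1-t;M+1) = B_M(1-t;M) = 1$, so $\tilde{h}_{N,M+1}(t;k,M+1)$ and $\tilde{h}_{N,M}(t;k,M)$ coincide identically and therefore share the same root, giving $t_{N,M+1}(k,M+1) = t_{N,M}(k,M)$ and $\epsilon_{N,M+1}(k,M+1) = \epsilon_{N,M}(k,M)$. All statements on $\{\epsilon_{N,M}(k,l)\}$ then follow by the defining relation $\epsilon_{N,M}(k,l) = 1 - t_{N,M}(k,l)$, which simply reverses each inequality on the roots. I do not anticipate a serious obstacle; the only genuine care needed is bookkeeping—matching each of the two non-trivial cases with the correct sign supplied by Lemma~\ref{lemma:app2} versus inequality~\eqref{eq:30}, and applying the comparison principle in the correct direction.
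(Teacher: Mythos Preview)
Your proposal is correct and is precisely the argument the paper has in mind: the paper omits the proof entirely, stating only that it ``is similar to that for Theorem~\ref{thm:33},'' and your write-up carries out exactly that similarity by transferring the Beta-function inequalities \eqref{eq:29}--\eqref{eq:30} to the strictly decreasing auxiliary function $\tilde{h}_{N,M}(t;k,l)$ via the comparison principle. The bookkeeping and the boundary case $l=M$ are handled correctly.
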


The proof is similar to that for Theorem \ref{thm:33}, and is hence omitted for brevity.

Before closing this subsection, we deal with the computational issue. Although $\{\epsilon_{N,M}(k,l)\}$ have no analytic expressions, one only needs to perform bisection to numerically search the root of $\tilde{h}_{N,M}(t;k,l)$ in $(0,1)$ to compute $\{\epsilon_{N,M}(k,l)\}$, as implied by Theorem \ref{thm:2}(i). The pseudo code for calculating $\{\epsilon_{N,M}(k,l)\}$ for all $k$ and $l$ is outlined in Algorithm 1, which can be readily implemented with numerical softwares. In order to reduce the search space, the monotonicity property (\ref{eq:monoticity}) is utilized, and the enumeration of $l$ is made from $M$ to $0$. For a particular choice of $k$ and $l$, one only needs to execute the inner-loop in Algorithm 1 with $LB = 0$ and $UB = 1$.

\begin{table}[htbp]
\centering
\renewcommand\arraystretch{1.2}
\begin{tabular}{p{0.46\textwidth}}
\hline
\textbf{Algorithm 1} Bisection Method for Computing A Posteriori Bounds \\
\hline
\textbf{Inputs}: Integers $\zeta$, $N$ and $M$, confidence level $\beta$, and coefficients $\{ a_m \}$. \\ 1: \ \textbf{Initialization}: Set numerical accuracy $\varepsilon_{\rm tol}$, and $t(k,M+1) \equiv 0,\ \forall k$. \\
2: \ \textbf{for} $k = 0, 1, \cdots, \zeta$ \\
3: \ \qquad \textbf{for} $l = M,M-1 \cdots, 0$ \\
4: \ \qquad \qquad $LB = t(k,l+1)$, $UB = 1$; \\
5: \ \qquad \qquad \textbf{while} $UB - LB \ge \varepsilon_{\rm tol}$ \textbf{do} \\
6: \ \qquad \qquad \qquad $t^{\rm new} = (UB + LB) / 2$; \\
7: \ \qquad \qquad \qquad \textbf{if} $\tilde{h}_{N,M}(t^{\rm new};k,l) \ge 0$ \\
8: \ \qquad \qquad \qquad \qquad $LB = t^{\rm new}$; \\
9: \ \qquad \qquad \qquad \textbf{else} \\
10: \qquad \qquad \qquad \qquad $UB = t^{\rm new}$; \\
11: \quad \ \ \qquad \qquad \textbf{end} \\
12: \ \ \quad \qquad \textbf{end} \\
13: \ \ \quad \qquad $t(k,l) = (UB + LB) / 2,\ \epsilon(k,l) = 1 - t(k,l)$. \\
14: \qquad \textbf{end} \\
15: \textbf{end} \\
16: \textbf{Return} $\{ \epsilon_{N,M}(k,l) \}$. \\
\hline
\end{tabular}
\label{tab:algorithm}
\end{table}

\subsection{Fundamental limits and a refinement procedure}
Above derivations imply that lower values of $\{ \epsilon(k,l) \}$ give more informative certificates on the solution's reliability. Hence a natural question comes up: given the confidence $\beta^*$, what is the best possible value of $\epsilon(k,l)$ satisfying \eqref{eq:guarantee} in Theorem \ref{thm:1}? Next we establish lower limits for $\{ \epsilon(k,l) \}$, where $\epsilon(k,l)$ is restricted to be strictly increasing in $l$, which is in agreement with the intuition that seeing more violations leads to a discounted reliability level.

\begin{thm}
\label{thm:5}
Consider all admissible functions $\{ \epsilon(k,l) \}$ satisfying (i) the probabilistic guarantee \eqref{eq:guarantee} in Theorem \ref{thm:1} with a prescribed $\beta^*$ and (ii) the monotonicity property $\epsilon(k,l) < \epsilon(k,l+1), ~l \in \mathbb{N}_{0:M-1}$. Then for $k \ge 1$ a lower limit $\underline{\epsilon}(k,l)$ of $\epsilon(k,l)$ is given by the root of the following equation in $(0,1)$:
\begin{equation}
\sum_{j=0}^l z_j B_{N + M}(\epsilon; k + j - 1) = \beta^*,
\label{eq:36}
\end{equation}
where
\begin{equation*}
z_j = \frac{\binom{N}{k} \binom{M}{j}}{\binom{N+M}{k+j}} \cdot \frac{k}{k + j}.
\end{equation*}
And, $\underline{\epsilon}(0,l) = 0,\ \forall l \in \mathbb{N}_{0:M}$.
\end{thm}

\begin{proof}
We show by contradiction that $\epsilon(k,l) < \underline{\epsilon}(k,l)$ is impossible for $k \ge 1$. Assume that there exist admissible functions $\{ \epsilon(k,l) \}$ satisfying (i) and (ii), and there exist $k'$ and $l'$ such that $\epsilon(k',l') < \underline{\epsilon}(k',l')$. Then we consider a fully-supported problem ${\rm SP}'_N[\omega_N]$ where $s_N^* = k' \le \zeta$ occurs w.p.1. When $k' \ge 1$, the probability density function of $V(\bold{x}_N^*)$ in ${\rm SP}'_N[\omega_N]$ is explicitly expressed as \cite{campi2008exact}:
\begin{equation}
p(\alpha) = \binom{N}{k'} \alpha^{k'-1} \cdot k' \cdot (1-\alpha)^{N-k'}.
\end{equation}
Hence, for this specific fully-supported problem ${\rm SP}'_N[\omega_N]$, it follows that:
\begin{equation*}
\begin{split}
&\ \mathbb{P}^{N+M} \left \{ V(\bold{x}_N^*) > \epsilon(s_N^*,r_M^*) \right \} \\
= &\ \sum_{j=0}^M \mathbb{P}^{N+M} \left \{ V(\bold{x}_N^*) > \epsilon(k',j) \land r_M^* = j \right \}. \\
= &\ \sum_{j=0}^M \int_{\epsilon(k',j)}^1 \binom{M}{j} p(\alpha) \cdot \alpha^j \cdot (1-\alpha)^{M-j} {\rm d} \alpha \\
= &\ \sum_{j=0}^M \int_{\epsilon(k',j)}^1 \binom{N}{k'}\binom{M}{j}k' \alpha^{k'+j-1} \cdot  (1-\alpha)^{N+M-k'-j} {\rm d} \alpha \\
= &\ \mathrm{[Integration\ by\ parts]} \\
= &\ \sum_{j=0}^M z_j \sum_{i=0}^{k'+j-1} \binom{N+M}{i} \epsilon(k',j)^i [1 - \epsilon(k',j)]^{N+M-i} \\
= &\ \sum_{j=0}^M z_j B_{N + M}(\epsilon(k',j); k' + j - 1)
\end{split}
\end{equation*}
Recall that $\epsilon(k',l) < \underline{\epsilon}(k',l)$ has been assumed, and $\{ \epsilon(k,l) \}$ satisfies the monotonicity property $\epsilon(k,l) < \epsilon(k,l+1), ~l \in \mathbb{N}_{0:M-1}$. It immediately follows that $\epsilon(k',j) < \underline{\epsilon}(k',l),\ \forall j \in \mathbb{N}_{0:l}$, and one obtains:
\begin{equation}
\begin{split}
&\ \mathbb{P}^{N+M} \left \{ V(\bold{x}_N^*) > \epsilon(s_N^*,r_M^*) \right \} \\
= &\ \sum_{j=0}^M z_j B_{N + M}(\epsilon(k',j); k' + j - 1) \\
\ge &\ \sum_{j=0}^l z_j B_{N + M}(\epsilon(k',j); k' + j - 1) \\
> &\ \sum_{j=0}^l z_j B_{N + M}(\underline{\epsilon}(k',l); k' + j - 1) \\
= &\ \beta^*
\end{split}
\label{eq:366}
\end{equation}
where the last inequality stems from Lemma 1 that $B_{M}(t;m)$ is strictly decreasing in $t$, and the last equality is due to \eqref{eq:36}. However, \eqref{eq:366} indicates that ${\rm SP}'_N[\omega_N]$ contradicts the distribution-free nature of $\{ \epsilon(k,l) \}$ in (i), i.e., the probabilistic guarantee \eqref{eq:guarantee} with a given $\beta^*$.
%For $k=0$, a fully-supported problem ${\rm SP}_N[\omega_N]$ where $s_N^* = 0$ w.p.1. always gives $V(\bold{x}_N^*) = 0$, and thus $\underline{\epsilon}(0,l) \equiv 0$ holds true w.p.1. This completes the proof.
\end{proof}

Fig. \ref{fig:LB} depicts a posteriori bounds $\{ \epsilon(k,l) \}$ under $\{a_m \equiv 1 / (N+1)\}$ and the fundamental lower limits $\{ \underline{\epsilon}(k,l) \}$ ($\beta = 10^{-6}$, $\zeta = 8$, $N = 100$, $M = 5$), where two surfaces are not far away from each other except for $k=0$. Hence, we pay a small price for distribution-free certificates $\{ \epsilon(k,l) \}$ since even knowing $k \ge 1$ beforehand does not provide too much improvement in risk evaluation. Indeed, this well inherits the merits of the wait-and-judge scenario optimization \cite{campi2018wait,garatti2019risk}, which can be interpreted as that the realization of $s_N^*$ is rather informative for a posteriori risk evaluation \cite{garatti2019risk}.

\begin{figure}
  \centering
  % Requires \usepackage{graphicx}
  \includegraphics[width=0.4\textwidth]{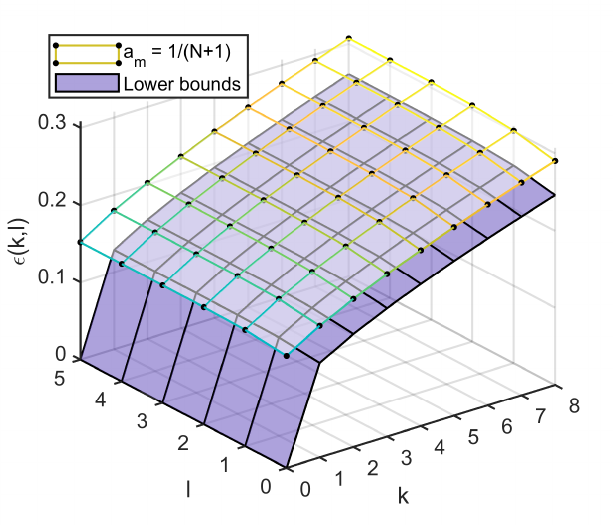}\\
  \caption{Comparison between a posteriori bounds $\{ \epsilon(k,l) \}$ derived by $\{a_m \equiv 1 / (N+1)\}$ and lower bounds $\{ \underline{\epsilon}(k,l) \}$ ($\beta = 10^{-6}$, $\zeta = 8$, $N = 100$, $M = 5$).}
  \label{fig:LB}
\end{figure}

Obviously, $\{ \epsilon(k,l) \}$ are decided by coefficients $\{a_m\}$, and different choice of $\{a_m\}$ lead to different probabilistic certificates. A natural question is that whether $\{ \epsilon(k,l) \}$ can be improved to approach the fundamental limits $\{ \underline{\epsilon}(k,l) \}$ by flexibly adjusting $\{a_m\}$. This can be formally cast as the following multi-objective optimization problem:
\begin{equation}
\begin{split}
\max\ &\ t(k,l),\ k\in\mathbb{N}_{0:\zeta},\ l\in\mathbb{N}_{0:M} \\
{\rm s.t.}\ &\ 0< t(k,l) <1,\ k\in\mathbb{N}_{0:\zeta},\ l\in\mathbb{N}_{0:M} \\
                 &\ \ \ \beta\sum_{m=k}^N a_m \binom{m}{k} t(k,l)^{m-N} \\
                 & = \binom{N}{k} B_M(1 - t(k,l); l),\ k\in\mathbb{N}_{0:\zeta},\ l\in\mathbb{N}_{0:M} \\
                 &\ \sum_{m=\zeta}^{N-1} a_m \ge \tau,\ \sum_{m=0}^N a_m = 1,\ a_m \ge 0,\ m \in\mathbb{N}_{0:N}
\end{split}
\label{eq:multiobj}
\end{equation}
where $\tau > 0$ is a small positive number to ensure (\ref{eq:parcond}). Clearly, (\ref{eq:multiobj}) is non-convex and is hence difficult to solve. Next we seek to devise a tailored algorithm. Suppose that a certain choice of coefficients $\{a_m\}$ is feasible for (\ref{eq:multiobj}). The following theorem provides a sufficient and necessary condition for making improvements over the current $\{a_m\}$.

\begin{thm}
\label{thm:7}
Given $N$, $M$ and $\beta$. Assume that $\{a_m\}$ and $\{a'_m\}$ satisfying (\ref{eq:parcond}) are different coefficients of polynomials $\tilde{h}_{N,M}(t;k,l)$ and $\tilde{h}'_{N,M}(t;k,l)$, whose roots in $(0,1)$ are $\{t(k,l)\}$ and $\{t'(k,l)\}$, respectively. Then $t'(k,l) \ge (>) t(k,l)$ holds if and only if
\begin{equation}
\beta\sum_{m=k}^N a'_m \binom{m}{k} t(k,l)^{m-N} \ge (>) \binom{N}{k} B_M(1 - t(k,l);l).
\label{eq:42}
\end{equation}
\end{thm}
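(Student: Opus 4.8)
The plan is to reduce the biconditional entirely to the strict monotonicity of $\tilde{h}_{N,M}$ that was already established in the proof of Theorem \ref{thm:2}(i). The starting observation is that the inequality (\ref{eq:42}) is nothing more than the statement that the \emph{primed} polynomial is non-negative when evaluated at the \emph{unprimed} root $t(k,l)$. Indeed, subtracting the right-hand side of (\ref{eq:42}) from its left-hand side produces exactly
\begin{equation*}
\beta\sum_{m=k}^N a'_m \binom{m}{k} t(k,l)^{m-N} - \binom{N}{k} B_M(1 - t(k,l);l) = \tilde{h}'_{N,M}(t(k,l);k,l),
\end{equation*}
so that (\ref{eq:42}) is equivalent to $\tilde{h}'_{N,M}(t(k,l);k,l) \ge 0$.

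Next I would observe that, because $\{a'_m\}$ also satisfies (\ref{eq:parcond}), the entire argument of Theorem \ref{thm:2}(i) applies verbatim to $\tilde{h}'_{N,M}$: it is strictly decreasing on $(0,1)$, diverges to $+\infty$ as $t \to 0^+$, is strictly negative at $t = 1$, and therefore possesses the unique root $t'(k,l)$ in $(0,1)$. Strict monotonicity then fixes the sign of $\tilde{h}'_{N,M}(t;k,l)$ everywhere relative to this root: for any $t \in (0,1)$ one has $\tilde{h}'_{N,M}(t;k,l) \ge 0$ if and only if $t \le t'(k,l)$.

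Finally I would specialize this sign characterization to the point $t = t(k,l) \in (0,1)$. Recalling that (\ref{eq:42}) is equivalent to $\tilde{h}'_{N,M}(t(k,l);k,l) \ge 0$, and that the latter holds precisely when $t(k,l) \le t'(k,l)$, the asserted equivalence $t'(k,l) \ge t(k,l) \Leftrightarrow (\ref{eq:42})$ follows immediately. I do not expect any genuine obstacle, as every ingredient is furnished by Theorem \ref{thm:2}(i); the only point deserving a word of care is verifying that the monotonicity and boundary-value arguments transfer to $\{a'_m\}$, which is immediate since $\{a'_m\}$ obeys the same constraints (\ref{eq:parcond}) as $\{a_m\}$.
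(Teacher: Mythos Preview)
Your proposal is correct and follows essentially the same approach as the paper: recognize that (\ref{eq:42}) is precisely $\tilde{h}'_{N,M}(t(k,l);k,l) \ge 0$, then use the strict monotonicity of $\tilde{h}'_{N,M}$ on $(0,1)$ (inherited from Theorem~\ref{thm:2}(i) since $\{a'_m\}$ satisfies (\ref{eq:parcond})) to conclude this is equivalent to $t(k,l) \le t'(k,l)$. The paper's proof is simply a more compressed version of the same argument.
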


\begin{proof}
It is known that $\tilde{h}'_{N,M}(t;k,l)$, whose root is $t'(k,l)$, is strictly decreasing in $(0,1)$. Therefore, when $t'(k,l) \ge (>) t(k,l)$, it holds that $\tilde{h}'_{N,M}(t(k,l);k,l) \ge (>) 0$, which leads to (\ref{eq:42}). The reverse is also true, which completes the proof.
\end{proof}

To derive refined coefficients $\{ a'_m \}$ based on present ones $\{ a_m \}$, it suffices to ensure (\ref{eq:42}) for all $k$ and $l$ and lift the LHS of (\ref{eq:42}) as much as possible. This can be achieved, for instance, by simply solving the following linear program (LP):
\begin{equation}
\begin{split}
\max_{\{ a_m' \}} &\ \sum_{k=0}^{\zeta} \sum_{l=0}^M \sum_{m=k}^N a_m' \binom{m}{k} t(k,l)^{m-N} \\
{\rm s.t.} &\ \ \quad \beta\sum_{m=k}^N a'_m \binom{m}{k} t(k,l)^{m-N} \\
&\ \ge \binom{N}{k} B_M(1 - t(k,l);l),\ \forall k \in \mathbb{N}_{0:\zeta}, \ l \in \mathbb{N}_{0:M} \\
&\ \sum_{m=\zeta}^{N-1} a_m' \ge \tau,\ \sum_{m=0}^N a_m' = 1,\ a_m' \ge 0,\ m \in\mathbb{N}_{0:N}
\end{split}
\label{eq:LP}
\end{equation}

By successively solving LP (\ref{eq:LP}), coefficients $\{ a_m \}$ and the associated $\{t(k,l)\}$ can be refined. The implementation details are summarized in Algorithm 2. It turns out that the resultant roots $\{t(k,l)\}$ and a posteriori bounds $\{ \epsilon(k,l) \}$ bear Pareto optimality.

\begin{table}[htbp]
\centering
\renewcommand\arraystretch{1.2}
\begin{tabular}{p{0.46\textwidth}}
\hline
\textbf{Algorithm 2} Refinement Procedure of A Posteriori Probability Bounds \\
\hline
\textbf{Inputs}: Integers $\zeta$, $N$ and $M$, confidence level $\beta$. \\
1: \textbf{Initialization}: Set coefficients $\{ a_m^{({\rm ite})} \}$ and iteration counter ${\rm ite} = 1$. \\
2: \textbf{Do until convergence} \\
3: \qquad - Compute $\{t^{({\rm ite})}(k,l)\}$ based on $\{a_m^{({\rm ite})} \}$. \\
4: \qquad - Solve (\ref{eq:LP}) with $\{ t^{({\rm ite})}(k,l) \}$ and obtain optimum $\{ a_m^{({\rm ite}+1)} \}$. \\
5: \qquad - Update ${\rm ite} = {\rm ite} + 1$. \\
6: \textbf{end} \\
7: \textbf{Return} $\epsilon(k,l) = 1 - t^{({\rm ite})}(k,l),\ \forall k,\ \forall l$. \\
\hline
\end{tabular}
\label{tab:algorithm2}
\end{table}

\begin{thm}
\label{thm:8}
In Algorithm 2, the sequence $\{t^{({\rm ite})}(k,l)\}$ converges, i.e.,
\begin{equation}
\lim_{{\rm ite} \to +\infty} t^{({\rm ite})}(k,l) = t^*(k,l) < + \infty,\ \forall k,\ l,
\end{equation}
and $\{t^*(k,l)\}$ lie on the Pareto optimal boundary of (\ref{eq:multiobj}). That is, there exist no coefficients $\{\tilde{a}^*_m\}$ that give $\{ \tilde{t}^*(k,l) \}$ strictly dominating $\{ t^*(k,l) \}$.
\end{thm}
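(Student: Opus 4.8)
The plan is to treat convergence and Pareto optimality separately, with Theorem~\ref{thm:7} serving as the dictionary between comparisons of roots and the linear constraints of~\eqref{eq:LP}. For the objective of~\eqref{eq:LP} I write $\Phi_{t}(a)=\sum_{k=0}^{\zeta}\sum_{l=0}^{M}\sum_{m=k}^{N}a_{m}\binom{m}{k}t(k,l)^{m-N}$, so that iteration ${\rm ite}$ maximizes $\Phi_{t^{({\rm ite})}}$. The first point I would establish is that the incumbent $\{a_{m}^{({\rm ite})}\}$ is itself feasible for the program~\eqref{eq:LP} it generates: since $t^{({\rm ite})}(k,l)$ is by definition the root of the polynomial $\tilde h_{N,M}(\cdot;k,l)$ built from $\{a_{m}^{({\rm ite})}\}$, each constraint of~\eqref{eq:LP} is met with equality by $\{a_{m}^{({\rm ite})}\}$. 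Consequently~\eqref{eq:LP} is feasible, its optimizer $\{a_{m}^{({\rm ite}+1)}\}$ satisfies every constraint, and Theorem~\ref{thm:7} gives $t^{({\rm ite}+1)}(k,l)\ge t^{({\rm ite})}(k,l)$ for all $k,l$. Each scalar sequence $\{t^{({\rm ite})}(k,l)\}_{{\rm ite}}$ is therefore non-decreasing and, by Theorem~\ref{thm:2}(i), contained in $(0,1)$; being monotone and bounded it converges to some $t^{*}(k,l)\le 1<+\infty$.

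For Pareto optimality I would argue by contradiction, assuming coefficients $\{\tilde a_{m}^{*}\}$ satisfying~\eqref{eq:parcond} whose roots $\{\tilde t^{*}(k,l)\}$ strictly dominate $\{t^{*}(k,l)\}$, i.e. $\tilde t^{*}(k,l)\ge t^{*}(k,l)$ for all $(k,l)$ with $\tilde t^{*}(k_{0},l_{0})>t^{*}(k_{0},l_{0})$ for some $(k_{0},l_{0})$. Because $t^{({\rm ite})}(k,l)\le t^{*}(k,l)\le\tilde t^{*}(k,l)$, Theorem~\ref{thm:7} shows that $\{\tilde a_{m}^{*}\}$ remains feasible for~\eqref{eq:LP} at \emph{every} iteration; hence optimality of $\{a_{m}^{({\rm ite}+1)}\}$ forces $\Phi_{t^{({\rm ite})}}(a^{({\rm ite}+1)})\ge\Phi_{t^{({\rm ite})}}(\tilde a^{*})$ for all ${\rm ite}$. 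Using compactness of the constraint set~\eqref{eq:parcond}, I would pass to a subsequence along which $\{a_{m}^{({\rm ite})}\}\to\{a_{m}^{*}\}$, and by continuity of the unique root of the strictly monotone $\tilde h_{N,M}$ in its coefficients conclude that $\{a_{m}^{*}\}$ has roots $\{t^{*}(k,l)\}$, so that $\{a_{m}^{*}\}$ saturates all constraints of~\eqref{eq:LP} at the limiting data $t^{*}$.

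The contradiction then follows by taking limits of the two objective sequences. Continuity of $\Phi$ in the data gives $\Phi_{t^{({\rm ite})}}(\tilde a^{*})\to\Phi_{t^{*}}(\tilde a^{*})$, and since $\{a_{m}^{*}\}$ saturates every constraint whereas $\{\tilde a_{m}^{*}\}$ satisfies the $(k_{0},l_{0})$ constraint \emph{strictly} (via the strict counterpart of Theorem~\ref{thm:7}, which follows from the strict monotonicity of $\tilde h_{N,M}$ at data $t^{*}$), summation over $(k,l)$ yields $\Phi_{t^{*}}(\tilde a^{*})>\Phi_{t^{*}}(a^{*})$. For the incumbent side, the slack of the $(k,l)$ constraint at $\{a_{m}^{({\rm ite}+1)}\}$ equals $\tilde h_{N,M}(t^{({\rm ite})}(k,l);k,l)$ formed from $\{a_{m}^{({\rm ite}+1)}\}$, whose own root is $t^{({\rm ite}+1)}(k,l)$; as both $t^{({\rm ite})}(k,l)$ and $t^{({\rm ite}+1)}(k,l)$ tend to $t^{*}(k,l)$, these slacks vanish and $\Phi_{t^{({\rm ite})}}(a^{({\rm ite}+1)})\to\Phi_{t^{*}}(a^{*})$. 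Passing to the limit in $\Phi_{t^{({\rm ite})}}(a^{({\rm ite}+1)})\ge\Phi_{t^{({\rm ite})}}(\tilde a^{*})$ then gives $\Phi_{t^{*}}(a^{*})\ge\Phi_{t^{*}}(\tilde a^{*})$, contradicting $\Phi_{t^{*}}(\tilde a^{*})>\Phi_{t^{*}}(a^{*})$. Hence no strictly dominating $\{\tilde a_{m}^{*}\}$ exists and $\{t^{*}(k,l)\}$ is Pareto optimal.

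The step I expect to be most delicate is this final limit transfer on the incumbent side, namely proving that the constraint slacks vanish so that the realized objective $\Phi_{t^{({\rm ite})}}(a^{({\rm ite}+1)})$ converges to the saturated value $\Phi_{t^{*}}(a^{*})$. This needs a uniform Lipschitz (or equicontinuity) bound on $\tilde h_{N,M}(\cdot;k,l)$ in $t$ over the compact coefficient set~\eqref{eq:parcond}, together with the fact that the roots stay bounded away from $0$, which holds because $t^{({\rm ite})}(k,l)$ is non-decreasing from a strictly positive value. The accompanying technical points --- continuity of the root map in the coefficients, continuity of $\Phi$ in the data, and the appeal to compactness to obtain a coefficient limit point $\{a_{m}^{*}\}$ despite possible non-uniqueness of the LP optimizer --- are routine but must be checked to make the argument airtight.
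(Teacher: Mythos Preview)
Your proposal is correct and follows essentially the same route as the paper: monotonicity plus boundedness for convergence of $\{t^{(\mathrm{ite})}(k,l)\}$, Bolzano--Weierstrass on the compact coefficient set to extract a limit point $\{a_m^*\}$, Theorem~\ref{thm:7} to certify feasibility of a putative dominating $\{\tilde a_m^*\}$, and a contradiction at the limiting LP. The only difference in presentation is that the paper asserts in one line that the subsequential limit $\{a_m^*\}$ is optimal for~\eqref{eq:LP} at data $t^*$ and then derives the contradiction directly there, whereas you reach the same contradiction by tracking the objective inequality $\Phi_{t^{(\mathrm{ite})}}(a^{(\mathrm{ite}+1)})\ge\Phi_{t^{(\mathrm{ite})}}(\tilde a^*)$ along the iterates and passing to the limit; your version is more explicit and your flagged ``delicate step'' (vanishing slacks via a uniform Lipschitz bound on $\tilde h_{N,M}$ over the compact coefficient set, with roots bounded away from $0$) is exactly what the paper's terse ``it is not difficult to show'' is hiding.
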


\begin{proof}
Following the same notations in Theorem \ref{thm:2}(i), we consider the polynomial $\tilde{h}_{N,M}(t;k,l)$ parameterized by $\{ a_m^{({\rm ite}+1)} \}$. On one hand, $\tilde{h}_{N,M}(t^{({\rm ite} + 1)}(k,l);k,l) = 0$. On the other hand, $\tilde{h}_{N,M}(t^{({\rm ite})}(k,l);k,l) \ge 0$ due to \eqref{eq:LP}. As a consequence, $t^{({\rm ite})}(k,l) \le t^{({\rm ite} + 1)}(k,l)$ holds in each iteration due to the monotonicity of $\tilde{h}_{N,M}(t;k,l)$ established in Theorem \ref{thm:2}(i). Meanwhile, since $t^{({\rm ite})}(k,l)$ is upper-bounded from $1$, it must converge to a finite limit $t^*(k,l)$. However, the sequence $\{ a_m^{({\rm ite})} \}$ does not necessarily converge. Because $0 \le a_m^{({\rm ite})} \le 1$, there is a convergent subsequence $\{ a_m^{({\rm ite}_i)} \}$ with indices $1 \le {\rm ite}_1 < {\rm ite}_2 < \cdots$ according to the Bolzano-Weierstrass theorem, i.e.,
\begin{equation}
\lim_{{i} \to +\infty} a_m^{({\rm ite}_i)} = a_m^* < +\infty,\ \forall m.
\end{equation}

Because $\tilde{h}_{N,M}(t;k,l)$ is continuous, by a continuity argument it is not difficult to show that $\{ a_m^* \}$ is an optimal solution to (\ref{eq:LP}) with $\{t(k,l) = t^*(k,l)\}$, and $\{t^*(k,l)\}$ are exactly the roots of $\tilde{h}_{N,M}(t;k,l)$ induced by $\{ a_m^* \}$. Then we proceed by contradiction. Suppose that there exist $\{\tilde{a}^*_m\}$ that give $\{ \tilde{t}^*(k,l) \}$ strictly dominating $\{ t^*(k,l) \}$. That is, $\tilde{t}^*(k,l) \ge t^*(k,l),~\forall k,l$ and there exists $(k',l')$ such that $\tilde{t}^*(k',l') > t^*(k',l')$. Then it immediately follows from Theorem \ref{thm:7} that $\forall k,l$:
\begin{gather*}
\begin{split}
\beta\sum_{m=k}^N \tilde{a}^*_m \binom{m}{k} t^*(k,l)^{m-N} \ge & \binom{N}{k} B_M(1 - t^*(k,l);l) \\
= & \beta\sum_{m=k}^N a^*_m \binom{m}{k} t^*(k,l)^{m-N},
\end{split}
\end{gather*}
and for the particular indices $(k',l')$,
\begin{gather*}
\begin{split}
\beta\sum_{m=k'}^N \tilde{a}^*_m \binom{m}{k'} t^*(k',l')^{m-N} > & \binom{N}{k'} B_M(1 - t^*(k',l');l') \\
= & \beta\sum_{m=k'}^N a^*_m \binom{m}{k'} t^*(k',l')^{m-N}.
\end{split}
\end{gather*}
Consequently, $\{\tilde{a}^*_m\}$ is not only feasible for (\ref{eq:LP}) with $\{t(k,l) = t^*(k,l)\}$, but also leads to an objective value strictly higher than $\{ a_m^* \}$ does. This contradicts the optimality of $\{ a_m^* \}$.
\end{proof}

To obtain different Pareto optimal certificates, one could use different initializations of $\{ a_m \}$. Fig. \ref{fig:bound} profiles $\{ \epsilon(k,l) \}$ derived with $\{a_m \equiv 1 / (N+1)\}$ and those after refinement with $\beta = 10^{-6}$, $\zeta = 8$, $N = 100$, $M = 5$. It can be seen that values of $\{ \epsilon(k,l) \}$ for all $k$ and $l$ become lower after refinement. While doing so one shall keep in mind that no matter how much improvement can be made over $\{ \epsilon(k,l) \}$, one cannot go below the fundamental limits $\{ \underline{\epsilon}(k,l) \}$ in Fig. \ref{fig:LB}.

% By resolving an LP, it not only returns Pareto optimal violation probabilities, but also provides abundant flexibility to integrate user preference into design of a posteriori bounds. For example, different weights can be used in the objective of (\ref{eq:LP}) to emphasize on particular options of $k$ and $l$. Meanwhile, due to the inherent robustness of scenario-based solutions, large $l$ has a low incidence, and one may be interested in sacrificing performance under large $l$ for enhanced performance under small $l$, which can be achieved by ruling out constraints and terms in the objective indexed by large $l$. It can be easily proved that the aforementioned choices eventually result in different Pareto optimal solutions to (\ref{eq:multiobj}). This gives rise to a systematic refinement procedure with great potential in incorporating prior knowledge and user preference.

\begin{figure}
  \centering
  % Requires \usepackage{graphicx}
  \includegraphics[width=0.4\textwidth]{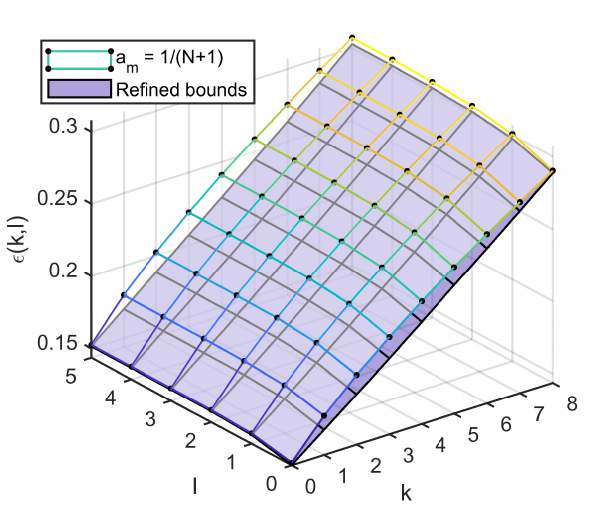}\\
  \caption{Profiles of a posteriori bounds $\{ \epsilon(k,l) \}$ derived by $\{a_m \equiv 1 / (N+1)\}$ and the refined bounds ($\beta = 10^{-6}$, $\zeta = 8$, $N = 100$, $M = 5$).}
  \label{fig:bound}
\end{figure}

\section{Case Studies on LQR Design of Aircraft Lateral Motion}
\label{sec:5}
\subsection{Problem Setup}
In this section, we adopt a simulation example of controller design of aircraft lateral motion from \cite{calafiore2006probabilistic}, which has been widely used as a testbed for probabilistic control design \cite{tempo2012randomized,levine2018probabilistic}, to demonstrate the developed theory. The system is described by the following state-space model:
\begin{equation*}
\dot{x}(t) = A(\delta)x(t) + B(\delta)u(t),
\end{equation*}
where system states $x(t) = [x_1(t)\ x_2(t)\ x_3(t)\ x_4(t)]^{\rm T}$ are the blank angle and its derivative, the sideslip angle, and the yaw rate, respectively. Control inputs $u(t) = [u_1(t)\ u_2(t)]^{\rm T}$ include the rudder deflection and the aileron deflection. System matrices $A(\delta)$ and $B(\delta)$ are influenced by uncertain parameters $\delta$, as given by:
\begin{gather*}
A(\delta) = \left [
\begin{array}{c c c c}
0 & 1 & 0 & 0 \\
0 & L_p & L_{\beta} & L_r \\
g/V & 0 & Y_{\beta} & -1 \\
N_{\dot{\beta}} & N_p & N_{\beta} + N_{\dot{\beta}}Y_{\beta} & N_r - N_{\dot{\beta}} \\
\end{array}
\right ], \\
B(\delta) = \left [
\begin{array}{c c}
0 & 0 \\
0 & L_{\delta_a} \\
Y_{\delta_r} & 0 \\
N_{\delta_r} + N_{\dot{\beta}}Y_{\delta_r} & N_{\delta_a} \\
\end{array}
\right ],
\end{gather*}
\begin{equation*}
\delta = [L_p\ L_{\beta}\ L_r\ g/V\ Y_{\beta}\ N_{\dot{\beta}}\ N_p\ N_r\ L_{\delta_a}\ Y_{\delta_r}\ N_{\delta_r}\ N_{\delta_a}]^{\rm T}.
\end{equation*}
It is assumed that a $10\%$ perturbation of $\delta$ is added around its nominal value $\bar{\delta}$, which is uniformly distributed.

The control goal is to design a state feedback controller $u=Kx$ such that real parts of eigenvalues of the closed-loop system are smaller than $-\gamma < 0$. That is, the system is stabilized with a desired decay rate $\gamma > 0$. In this case we choose $\gamma = 0.5$. A possible formulation of the controller design problem is given by the following semi-definite program (SDP), which involves an infinite number of linear matrix inequalities (LMIs):
\begin{equation}
\begin{split}
 \min_{P \succ 0,Y}  &\ {\rm Tr} \{ P \} \\
{\rm s.t.}\ &\ P \succeq \theta I \\
           &\ A(\delta)P + PA(\delta)^{\rm T} + B(\delta)Y + Y^{\rm T}B(\delta) + 2\gamma P \preceq 0, \\
           & \quad \quad \quad \quad \quad \quad \quad \quad \quad \quad \quad \quad \quad \quad \quad \quad \quad \quad \quad \forall \delta \in \Delta
\end{split}
\label{eq:LQRdesign}
\end{equation}
where $P \in \mathbb{R}^{4 \times 4}$ and $Y \in \mathbb{R}^{2 \times 4}$. $\theta = 0.01$ is a small positive number to ensure the positive definiteness of $P$. The control gain $K$ can be finally computed as $K = YP^{-1}$.

\begin{table*}
\caption{Results of Different A Posteriori Bounds from 2000 Monte Carlo Runs ($\beta = 10^{-6}$)}
\centering
\begin{tabular}{c c | c c | c c | c c}
\hline
\multirow{2}{*}{$N$} & \multirow{2}{*}{$M$} & \multicolumn{2}{c|}{$\epsilon(s_N^*)$} & \multicolumn{2}{c|}{$\eta(r_M^*)$} & \multicolumn{2}{c}{$\epsilon(s_N^*,r_M^*)$} \\
 &  & $\mu_{\rm GAP}$ & $\sigma_{\rm GAP}$ & $\mu_{\rm GAP}$ & $\sigma_{\rm GAP}$ & $\mu_{\rm GAP}$ & $\sigma_{\rm GAP}$ \\
\hline
\multirow{4}{*}{$100$} & $100$ & \multirow{4}{*}{$18.18\%$} & \multirow{4}{*}{$2.74\%$} & $16.92\%$ & $3.63\%$ & $10.25\%$ & $1.83\%$ \\
                       & $200$ &                            &                           & $10.37\%$ & $2.49\%$ & $7.69\%$  & $1.50\%$ \\
                       & $500$ &                            &                           &  $5.67\%$ & $1.49\%$ & $4.89\%$  & $1.07\%$ \\
                       & $1000$&                            &                           &  $3.67\%$ & $1.03\%$ & $3.36\%$  & $0.80\%$ \\
\hline
\multirow{4}{*}{$200$} & $100$ & \multirow{4}{*}{$9.82\%$} & \multirow{4}{*}{$1.50\%$} & $15.32\%$ & $2.89\%$ & $6.74\%$ & $1.14\%$ \\
                       & $200$ &                            &                           &  $8.99\%$ & $2.05\%$ & $5.38\%$ & $0.98\%$ \\
                       & $500$ &                            &                           &  $4.61\%$ & $1.24\%$ & $3.58\%$ & $0.77\%$ \\
                       & $1000$&                            &                           &  $2.90\%$ & $0.83\%$ & $2.51\%$ & $0.60\%$ \\
\hline
\multirow{4}{*}{$500$} & $100$ & \multirow{4}{*}{$4.13\%$} & \multirow{4}{*}{$0.62\%$} & $14.00\%$ & $2.06\%$ & $3.40\%$ & $0.53\%$ \\
                       & $200$ &                            &                           &  $7.78\%$ & $1.46\%$ & $2.96\%$ & $0.48\%$ \\
                       & $500$ &                            &                           &  $3.72\%$ & $0.86\%$ & $2.22\%$ & $0.40\%$ \\
                       & $1000$&                            &                           &  $2.23\%$ & $0.57\%$ & $1.65\%$ & $0.34\%$ \\
\hline
\end{tabular}
\label{tab:MonteCarlo}
\end{table*}

Assume that there is no complete knowledge about $\Delta$ and only some samples of $\delta$ can be collected. To obtain an approximate solution to (\ref{eq:LQRdesign}), we turn to the following scenario program with a total of $d = 18$ free decision variables in matrices $P$ and $Y$:
\begin{equation}
\begin{split}
 \min_{P \succ 0,Y}  &\ {\rm Tr} \{ P \} \\
{\rm s.t.}\ &\ P \succeq \theta I \\
           &\ A(\delta^{(i)})P + PA(\delta^{(i)})^{\rm T} + B(\delta^{(i)})Y + Y^{\rm T}B(\delta^{(i)}) \\
           & \qquad \qquad \qquad \qquad \qquad + 2\gamma P \preceq 0,\ i \in \mathbb{N}_{0:N}
\end{split}
\label{eq:SDP}
\end{equation}
where $\{ \delta^{(i)} \}_{i=1}^N$ are randomly collected scenarios of uncertain parameters. To solve the large-scale SDP (\ref{eq:SDP}), we use {\tt cvx} package in MATLAB equipped with the MOSEK solver \cite{grant2008cvx}.

\subsection{Results and Discussions}
In the simulation phase, we randomly generate $N = 500$ independent scenarios $\{ \delta^{(i)} \}$ for solving (\ref{eq:SDP}), and $M = 500$ validation scenarios for empirically estimating the violation frequency of the LMI in (\ref{eq:LQRdesign}). The confidence is set as $\beta = 10^{-6}$ (practical certainty). Since there is no knowledge about a tighter bound of Helly's dimension, we set $\zeta = d = 18$. The generic a priori bound (\ref{eq:campi}) yields $\epsilon = 0.0889$. The attractiveness of a posteriori bound lies in that, upon seeing $\bold{x}_N^*$, $s_N^*$ and $r_M^*$, the certificate of $V(\bold{x}_N^*)$ can be refined. For example, in a particular simulation run, $s_N^* = 3$ support constraints and $r^*_M = 2$ times of violations have been revealed. Accordingly, we use $a_m \equiv 1/(N+1)$ to derive $\epsilon(s_N^*,r^*_M) = 0.0268$, indicating that with confidence $99.9999\%$ the violation probability $V(\bold{x}_N^*)$ is no more than $2.68\%$, which is much tighter than the a priori bound. In contrast, $\epsilon(s_N^*) = 0.0486$ with support constraints information used only \cite{campi2018wait}. If the one-sided C-P bound (\ref{eq:onesided}) for Bernoulli trials is adopted, we obtain $\eta_M(r_M^*) = 0.0376$. Hence in this instance, by synthesizing comprehensive information from both decisive support constraints and validation tests, a tighter certificate on the violation probability $V(\bold{x}_N^*)$ can be obtained.

Next, we carry out 2000 Monte Carlo simulation runs for $N=100,200,500$ and $M=100,200,500,1000$ with $\beta = 10^{-6}$. In each run, after deriving the solution $\bold{x}^*_N$, we obtain a high-fidelity estimate $\hat{V}(\bold{x}^*_N)$ of its true violation probability $V(\bold{x}^*_N)$ with $10^5$ additional Monte Carlo samples. Then the gap between a certain bound with $\hat{V}(\bold{x}^*_N)$ can be calculated as a performance quantification, which, for instance, is ${\rm GAP} = \epsilon(s_N^*,r_M^*) - \hat{V}(\bold{x}^*_N)$ for the proposed bound. By summarizing results in 2000 runs, the mean value $\mu_{\rm GAP}$ and the standard deviation $\sigma_{\rm GAP}$ of the gap are further calculated and summarized in Table \ref{tab:MonteCarlo}.

Many interesting observations can be attained from Table \ref{tab:MonteCarlo}. First, in all cases the proposed bound $\epsilon(s_N^*,r_M^*)$ yields the smallest mean and standard deviation of the gap, indicating its significantly reduced conservatism. This is because both $\epsilon(s_N^*)$ and $\eta_M(r_M^*)$ only rely on partial information. With fixed $N$ and small $M$, $\eta_M(r_M^*)$ tends to be quite conservative, while the proposed bounds provides the tightest certificates by using comprehensive information. With $M$ increasing, both $\eta_M(r_M^*)$ and $\epsilon(s_N^*,r_M^*)$ get lower; when $M$ is sufficiently large, the difference between $\epsilon(s_N^*,r_M^*)$ and $\eta_M(r_M^*)$ vanishes, because the effect of validation tests becomes increasingly dominant. Note that in this case the wait-and-judge approach gives a constant bound $\epsilon(s_N^*)$ without leveraging validation information. On the other hand, with $N$ increasing, the conservatism of both $\epsilon(s_N^*)$ and $\epsilon(s_N^*,r_M^*)$ is reduced, since a larger $N$ leads to improved robustness of randomized solutions. This can also be evidenced from the fact that, under the same $M$, the difference between $\epsilon(s_N^*,r_M^*)$ and $\epsilon(s_N^*)$ declines with $N$ increasing, while the difference between $\eta_M(r_M^*)$ and $\epsilon(s_N^*,r_M^*)$ becomes more pronounced. Henceforth, the proposed bound $\epsilon(s_N^*,r_M^*)$ features a sophisticated integration of information from both support constraints and validation tests. When $N$ is small, $\epsilon(s_N^*,r_M^*)$ tends to depend more on validation test, while when $M$ is small, information carried by $s_N^*$ becomes dominant. This yields an explanation for empirically better performances of $\epsilon(s_N^*,r_M^*)$ under various choices of $N$ and $M$.
%\begin{figure}
%  \centering
  % Requires \usepackage{graphicx}
%  \includegraphics[width=0.38\textwidth]{Bound_500_500.eps}\\
%  \caption{Posteriori violation probabilities with $N = 500$, $M = 500$ estimated with 1000 Monte Carlo runs.}
%  \label{fig:bound_500_500}
%\end{figure}

Table \ref{tab:freq} reports frequencies of $s_N^*$ and statistics of $r_M^*$ with $M = 1000$ during 2000 Monte Carlo runs. The realization of $s_N^*$ is always between $2$ and $8$, which is much smaller than $d = 18$. Hence, the information carried by $s_N^*$ helps give a non-conservative certificate due to the non-fully-supportedness, which is not rare in practical situations \cite{care2014fast,welsh2011robust}. In addition, as analyzed in Section III, the actual improvement in $\epsilon(s_N^*, r_M^*)$ occurs primarily in the region where the joint distribution of $(s_N^*, r_M^*)$ is concentrated. This can also be seen from Table \ref{tab:freq} where the mean value of $r_M^*/M$, which is computed conditionally to the realization of $s_N^*$, is always similar to or lower than that of $s_N^*/N$, and excessively large $r_M^*$ seldom occurs. Hence, these ``positive" validation outcomes from Bernoulli trials also take effect in reducing the conservatism of $\epsilon(s_N^*, r_M^*)$.

% We remark that it is not always the case that $\epsilon(s_N^*,r_M^*) < \eta_M(r_M^*)$. When $s_N^*$ is large, it may be the case that $\epsilon(s_N^*,r_M^*) \ge \eta_M(r_M^*)$. For example, when $s_N^* = 15$ and $r_M^*=2$, $\epsilon(15,2) = 0.0384 > \eta_M(r_M^*) = 0.0376$. However, the difference is quite minor, and most importantly, this sporadically occurs, since in normal cases the actual number of support constraints is usually much smaller than $d$, see e.g. \cite{welsh2011robust,care2014fast}; hence, small $s_N^*$ and $\epsilon(s_N^*,r_M^*)$ often show up, thereby rendering the developed a posteriori bounds practically advantageous.

\begin{table*}
\caption{Occurrences of $s_N^*$ and $r_M^*$ in 2000 Monte Carlo Runs ($M = 1000$)}
\centering
\begin{tabular}{c | c | c c c c c c c}
\hline
& $s_N^*$ & 2 & 3 & 4 & 5 & 6 & 7 & 8 \\
\hline
          & Frequency & $45.70\%$ & $22.20\%$ & $23.30\%$ & $7.10\%$ & $1.50\%$ & $0.20\%$ & $0.00\%$ \\
$N = 100$ & $s_N^*/N$ & $2.00\%$ & $3.00\%$ & $4.00\%$ & $5.00\%$ & $6.00\%$ & $7.00\%$ & $8.00\%$ \\
          & Conditional mean($r_M^*/M$) & $1.90\%$ & $1.87\%$ & $1.92\%$ & $2.12\%$ & $2.45\%$ & $2.88\%$ & - \\
\hline
          & Frequency & $41.85\%$ & $21.70\%$ & $25.60\%$ & $9.15\%$ & $1.40\%$ & $0.25\%$ & $0.05\%$ \\
$N = 200$ & $s_N^*/N$ & $1.00\%$ & $1.50\%$ & $2.00\%$ & $2.50\%$ & $3.00\%$ & $3.50\%$ & $4.00\%$ \\
          & Conditional mean($r_M^*/M$) & $1.00\%$ & $0.90\%$ & $0.96\%$ & $1.08\%$ & $1.15\%$ & $1.37\%$ & $1.35\%$ \\
\hline
          & Frequency & $35.80\%$ & $24.75\%$ & $29.90\%$ & $7.70\%$ & $1.50\%$ & $0.35\%$ & $0.00\%$ \\
$N = 500$ & $s_N^*/N$ & $0.40\%$ & $0.60\%$ & $0.80\%$ & $1.00\%$ & $1.20\%$ & $1.40\%$ & $1.60\%$ \\
          & Conditional mean($r_M^*/M$) & $0.40\%$ & $0.36\%$ & $0.38\%$ & $0.44\%$ & $0.48\%$ & $0.76\%$ & - \\
\hline
\end{tabular}
\label{tab:freq}
\end{table*}

Finally, we consider the practical case where independent validation scenarios are successively accumulated after $\bold{x}_N^*$ is obtained. For clarity, a posteriori bounds based on 20 incrementally emerging scenarios have been depicted in Fig. \ref{fig:incremental}. Notice that tendencies of both $\epsilon_{N,M}(s_N^*,r_M^*)$ and $\eta_M(r_M^*)$ are consistent with constraint violation results, thereby confirming the correctness of Theorem \ref{thm:4}. Another observation is that the updating of $\epsilon_{N,M}(s_N^*,r_M^*)$ starts from $\epsilon_{N,0}(s_N^*,0) = 0.1176$, while the evolution of $\eta_M(r_M^*)$ has to start from $M=1$. Therefore, at the beginning stage, using $\epsilon_{N,M}(s_N^*,r_M^*)$ tends to be much more viable than using $\eta_M(r_M^*)$. This fundamentally owes to the usage of an increased level of information in $s_N^*$. It also justifies that, to reach a reliable certificate with $\eta_M(r_M^*)$ abundant samples are entailed \cite{tempo2012randomized}. In this sense, the proposed method enjoys desirable applicability in practical situations, especially when validation scenarios arrive in an incremental manner, or $M$ is not too large.
\begin{figure}[h]
\subfigure[]{
\includegraphics[width = 0.46\textwidth]{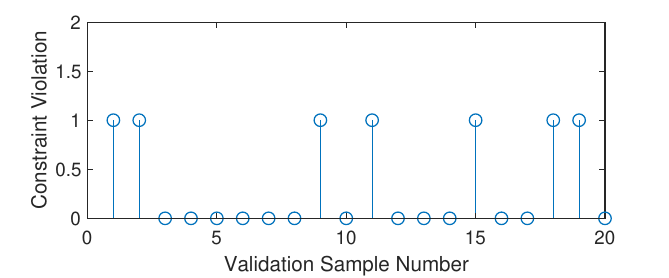}}
\subfigure[]{
\includegraphics[width = 0.485\textwidth]{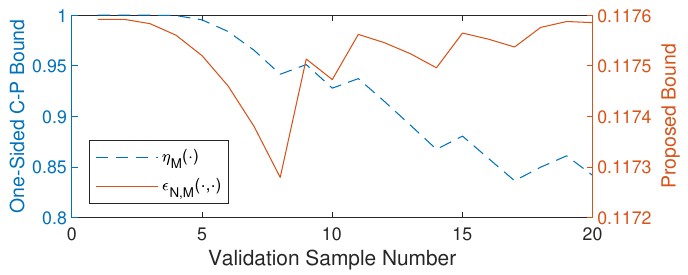}}
\caption{Incremental adjustment of a posteriori violation probabilities with $N = 200$, $s_N^* = 3$. (a) Constraint violations on validation samples. (b) Updated one-sided C-P bound $\eta_M(r_M^*)$ and the proposed bound $\epsilon_{N,M}(s_N^*,r_M^*)$. Notice that $\eta_M(r_M^*)$ and $\epsilon_{N,M}(s_N^*,r_M^*)$ have different ordinate scales.}
\label{fig:incremental}
\end{figure}

\section{Concluding Remarks}
In this work, we proposed a general class of a posteriori probabilistic bounds for convex scenario programs with violation probability of scenario-based solutions assessed on additional validation scenarios. The a posteriori bounds turned out to be a function of both the number of support constraints and violation frequency on validation datasets. It can assess the feasibility of scenario-based solutions with randomness overall considered, which arises from both sampling $N$ scenarios for formulating the problem and sampling $M$ scenarios for validation. Thanks to the utilization of more information, refined evaluation of solution's risk can be made by the proposed bounds. It has been shown the established result contains the existing bounds in scenario optimization as special cases, thereby enjoying wide generality. For practical use, we developed a class of a posteriori violation probabilities under prespecified confidence levels, which are shown to possess a variety of desirable properties allowing for easy implementations and clear interpretations. Comprehensive case studies demonstrated the efficacy of the proposed a posteriori bounds in judging robustness of solutions to convex scenario programs.

Finally, it is noteworthy that similar to the wait-and-judge optimization \cite{campi2018wait}, the proposed probabilistic guarantee does not imply a \textit{conditional validity} either, i.e., $\mathbb{P}^{N+M} \left \{ V(\bold{x}_N^*) > \epsilon(k,l) | s_N^* = k, r_M^* = l \right \} \le \beta^*$. The recent work \cite{garatti2019learning} provides an interesting idea for attaining such a conditional validity, which is worth further investigating. Another promising direction is to generalize these results in a repetitive design scheme to achieve a ``prescribed" risk level by deliberately choosing $N$ and $M$.

% \section*{Appendix}

%\subsection{Proof of Theorem 2}
%Denote by $\mathbb{Q}$ the probability measure of $\bold{x}^*_N$ in $\mathbb{R}^d$. Then we have:
%\begin{equation*}
%\begin{split}
%&\ \mathbb{P}^{N+M} \left \{ V(\bold{x}^*_N) > p_{M}^*(r_M^*) \right \} \\
%= &\ \int_{\Delta^N} \int_{\Delta^M} \bold{1} \left \{ V(\bold{x}^*_N) > p_{M}^*(r_M^*) \right \} {\rm d} \mathbb{P}^M {\rm d} \mathbb{P}^N \\
%= &\ \int_{\mathbb{R}^d} \int_{\Delta^M} \bold{1} \left \{ V(\bold{x}^*_N) > p_{M}^*(r_M^*) \right \} {\rm d} \mathbb{P}^M {\rm d} \mathbb{Q} \\
%= &\ \int_{\mathbb{R}^d} \mathbb{P}^{M} \left \{ V(\bold{x}) > p_{M}^*(r_M^*) \right \} {\rm d} \mathbb{Q} \\
%\le &\ \beta^* \int_{\mathbb{R}^d} {\rm d} \mathbb{Q} \qquad {\rm (Theorem\ 1)} \\
%= &\ \beta^* \\
%\end{split}
%\end{equation*}
%This completes the proof.

% \subsection{Proof of Lemma 2}

\section*{Acknowledgments}
The first author, C. Shang, was supported by National Science and Technology Innovation 2030 Major Project (No. 2018AAA0101604) of the Ministry of Science and Technology of China, and National Natural Science Foundation of China (Nos. 61873142, 61673236). F. You acknowledges financial support from the National Science Foundation (NSF) CAREER Award (CBET-1643244). The authors would like to thank the editor and anonymous reviewers for their valuable and constructive comments, which helped significantly improve this paper.

%\bibliographystyle{IEEEtran}
%\bibliography{MyBibFile}

% Generated by IEEEtran.bst, version: 1.12 (2007/01/11)

\end{document}